\documentclass[preprint,3p]{elsarticle}

\usepackage{amssymb,amsmath,amsthm}
\newtheorem{theorem}{Theorem}
\newtheorem{lemma}{Lemma}
\newtheorem{corollary}{Corollary}
\newtheorem{definition}{Definition}

\usepackage{mathtools} 

\usepackage{xcolor}
\usepackage{bm}
\usepackage{xcolor}
\usepackage{amssymb}
\usepackage{amsfonts}
\usepackage{amsmath}
\usepackage{fancybox}
\usepackage{color}
\usepackage{bm}
\newtheorem{remark}{Remark}

\usepackage{graphicx}
\usepackage{epstopdf}
\DeclareGraphicsRule{.tif}{png}{.png}{`convert #1 `dirname #1`/`basename #1 .tif`.png}

\newcommand{\DIVGMLS}{\overset{{\mbox{\tiny\sf GMLS}}}{DIV}} 
\newcommand{\DIVGMLSP}{\overset{{\mbox{\tiny\sf GMLS$^\star$}}}{DIV}} 

\DeclareMathAccent{\maxvec}{\mathord}{letters}{"7E}
\DeclareMathOperator*{\argmin}{argmin}
\DeclareGraphicsRule{.tif}{png}{.png}{`convert #1 `dirname #1`/`basename #1 .tif`.png}

\journal{Journal}

\begin{document}

\begin{frontmatter}


                    
\title{A conservative, consistent, and scalable meshfree mimetic method}


\cortext[cor1]{Corresponding author}

\fntext[sand-blurb]{Sandia National Laboratories is a multimission laboratory managed and operated by National Technology and Engineering Solutions of Sandia, LLC., a wholly owned subsidiary of Honeywell International, Inc., for the U.S. Department of Energy's National Nuclear Security Administration under contract DE-NA-0003525. This paper describes objective technical results and analysis. Any subjective views or opinions that might be expressed in the paper do not necessarily represent the views of the U.S. Department of Energy or the United States Government.
}

\fntext[CCR]{Center for Computing Research, Sandia National Laboratories, Albuquerque, NM 87185}

\author[CCR,sand-blurb]{Nathaniel Trask\corref{cor1}}
\ead{natrask@sandia.gov}
\ead[url]{http://www.sandia.gov/~natrask/}

\author[CCR,sand-blurb]{Pavel Bochev}
\ead{pbboche@sandia.gov}
\ead[url]{http://www.sandia.gov/~pbboche/}

\author[CCR,sand-blurb]{Mauro Perego}
\ead{mperego@sandia.gov}
\ead[url]{http://www.sandia.gov/~mperego/}

\begin{abstract}
Mimetic methods discretize divergence by restricting the Gauss theorem to mesh cells. Because point clouds lack such geometric entities, construction of a compatible meshfree divergence remains a challenge.
In this work, we define an abstract Meshfree Mimetic Divergence (MMD) operator on point clouds by contraction of \emph{field} and \emph{virtual face} moments. This MMD satisfies a discrete divergence theorem, provides a discrete local conservation principle, and is first-order accurate. 
We consider two MMD instantiations. The first one assumes a background mesh and uses 
generalized moving least squares (GMLS) to obtain the necessary field and face moments. This MMD instance is appropriate for settings where a mesh is available but its quality is insufficient for a robust and accurate mesh-based discretization. 
The second MMD operator retains the GMLS field moments but defines \emph{virtual face} moments using computationally efficient weighted graph-Laplacian equations. This MMD instance does not require a background grid and is appropriate for applications where mesh generation creates a computational bottleneck. It allows one to trade an expensive mesh generation problem for a scalable algebraic one, without sacrificing compatibility with the divergence operator.
We demonstrate the approach by using the MMD operator to obtain a virtual finite-volume discretization of conservation laws on point clouds. 
Numerical results in the paper confirm the mimetic properties of the method and show that it behaves similarly to standard finite volume methods.
\end{abstract}

\begin{keyword}
Meshfree \sep
Generalized moving least squares \sep
Compatible discretizations\sep
Mimetic methods



\end{keyword}

\end{frontmatter}



\section{Introduction}
The vast majority of numerical methods for  PDEs rely on partitioning of the spatial domain into a mesh to both represent the unknown fields and to define the discrete operators acting on these fields. The quality of the resulting mesh-based schemes usually depends on the quality of the underlying mesh and may suffer when the latter deteriorates. For example, shape functions defined on ``sliver'' elements can result in badly conditioned or even singular stiffness matrices; see  \cite{Babuska_76_SINUM} and the more recent survey \cite{Shewchuk_02_INPROC}.

Accordingly, automated generation of high-quality grids is a task of tremendous practical importance for mesh-based PDE discretizations. Yet, for problems with complex geometries this task remains a major challenge, and creates a significant performance bottleneck in the simulation workflow \cite{Hardwick_05_TECHREPORT}.  As future applications continue to evolve beyond forward simulation, overcoming this bottleneck will be of essence for enabling tasks such as shape optimization and uncertainty quantification.
For applications where the domain evolves in time, maintaining a high-quality mesh is even more challenging, typically requiring the introduction of remap/remeshing algorithms as elements collapse and tangle under Lagrangian motion. This situation is typical of continuum mechanics problems characterized by large deformation and topology change, such as free-surface flows, hypervelocity impact, and ductile fracture. 

Meshfree methods can significantly alleviate or even completely eliminate the mesh generation bottleneck. 
For example, the domain integration in Galerkin meshfree methods can be performed on substandard meshes, by overlaying the domain with a mesh that is independent of the point cloud \cite{Belytschko_94_IJNME}, or even by a non-conforming set of cells as in the SNNI family of schemes  \cite{Chen_13_IJNME}. 
Collocation meshfree methods on the other hand only require a formation of well-distributed point clouds, which is much easier than generation of high-quality grids, especially for complex geometric domains.

%
%
However, since their inception, meshfree methods have struggled to simultaneously maintain rigorous notions of consistency, accuracy, stability, and compatibility that are now commonplace in mesh-based approaches. While significant improvements have been made in recent years addressing the approximation theory of meshfree methods, a meshfree framework mirroring the conservation  and consistency properties of mesh-based compatible discretizations has remained elusive.
The root cause is that the topological and geometric data employed by typical compatible mesh-based discretizations such as mimetic finite differences (MFD) \cite{Lipnikov_13_JCP}, Whitney elements \cite{Bossavit_88_IEE}, and finite volumes  \cite{Eymard_00_INCOLL,Trapp_06_THESIS,Nicolaides_97_SINUM} cannot be assumed readily available in the meshfree setting, especially in the absence of a conforming background mesh. 
To successfully transplant the properties of these methods to such a setting it is important to recognize that their compatibility is a consequence of discrete differentiation based on the generalized Stokes theorem
\begin{equation}\label{eqn:GST}
  \int_\omega d\bm{u} = \int_{\partial \omega} \bm{u} \,.
\end{equation}
In \eqref{eqn:GST} $d$ is the exterior derivative,  $\bm{u}$ is a $k$-differential form and $\omega$ is a $k+1$ dimensional manifold. This theorem implies that for a sufficiently smooth $\bm{u}$ its derivative at a point $\bm{x}$ can be characterized as
\begin{equation}\label{eq:d}
d \bm{u} (\bm{x}) = \lim_{ \mu(\omega)\rightarrow 0}
\frac{\int_{\partial \omega} \bm{u} } { \mu(\omega)} \,,
\end{equation}
where $\omega$ is a $k+1$-dimensional region with measure $\mu(\omega)$ containing the point $\bm{x}$. Restriction of the right hand side in \eqref{eq:d} to a $k+1$-dimensional mesh entity $\omega_h$ then yields a notion of a discrete derivative  that satisfies a discrete Stokes theorem by construction. In particular, the resulting discrete grad, curl and div operators mimic the vector calculus identities $curl\, grad\, \bm{u} = 0$ and $ div\, curl \,\bm{u}=0$; see, e.g., \cite{Hyman_97_CMA}.

%

If one wishes to use this procedure to define the discrete derivatives, then the underlying discretization infrastructure must provide the following three key pieces of information:
\begin{itemize}
\setlength\itemsep{0.5ex}
\item \emph{field data} representing the integrand $\bm{u}$;
\item \emph{topological data} describing the action of the boundary operator on a geometric entity $\omega$, and
\item \emph{metric data} providing the measures of the geometric entity $\omega$ and its boundary $\partial\omega$. 
\end{itemize}
The first one, i.e., the field data, can be presumed available irregardless of the type of the underlying discretization as both mesh-based and meshfree methods should at a minimum be able to represent the approximate PDE solutions. 
%
The second and third information pieces are virtually always available in mesh-based methods. The nodes, edges, faces and cells in most grids form a chain complex, which provides the necessary topological data for compatible discrete grad, curl and div operators; see, e.g., \cite{Bochev_06a_IMA,matt97}. Likewise, the necessary metric data can be calculated from the mesh description without much difficulty. 

This is not the case when the spatial domain is represented by a point cloud. The amount of metric and topological information that can be extracted from the cloud at a reasonable cost is quite limited. One can build, at a linear cost, the $\epsilon$-ball graph of the cloud, construct its vertex-to-edge incidence matrix and compute the lengths of its edges. This would provide \eqref{eq:d} with just enough information to build a compatible discrete gradient \cite{Bochev_06a_IMA} on the point cloud but not much else.

Here we develop a computationally efficient and scalable approach to generate the topological and metric data for \eqref{eq:d} needed  to define a compatible divergence operator on point clouds. 
Using as a template a mimetic divergence on a primal-dual mesh, we employ a \emph{virtual} primal-dual mesh to construct 
an abstract meshfree version of this operator by contraction of \emph{field moments}, associated with the virtual faces, and \emph{face moments}, which characterize the latter algebraically. 
%
%

We consider two instantiations of this abstract mimetic meshfree divergence (MMD) operator. The first one assumes a background mesh and uses  generalized moving least squares (GMLS) \cite{WendlandBook} to obtain the necessary field and face moments. This MMD instance is appropriate for settings where a background mesh exists but its quality is insufficient for a robust and accurate mesh-based discretization. 
In this case the mesh is used only to define a boundary operator and to integrate the local GMLS polynomial basis over the cell faces. Both of these tasks can be performed reliably even on substandard grids.   

The second MMD operator does not assume a background mesh and uses instead the $\epsilon$-ball graph of the point cloud and its formal dual as surrogates for a primal and a virtual dual mesh, respectively. The  face-to-cell incidence matrix of the latter  provides the necessary topological information for the divergence operator.  This MMD instance retains the GMLS field moments but defines the virtual face moments in terms of \emph{virtual area potentials}, leading to a graph Laplacian problem that  can be solved in an efficient and scalable manner by standard multigrid preconditioners.   In so doing we trade a challenging mesh generation problem for a scalable algebraic one, without sacrificing compatibility with the divergence operator.

In \cite{trask_17_sisc} we used the \emph{local} connectivity graph of each particle and its formal dual to mimic the staggered arrangement in div-grad stencils on primal-dual grids. The resulting ``staggered'' meshfree scheme behaves similarly  to mesh-based div-compatible discretizations but is not locally conservative because the primal-dual mesh surrogate is defined independently for each particle. This paper is a logical extension of \cite{trask_17_sisc} aiming to deliver a  meshfree  scheme mirroring the conservation properties of traditional compatible discretizations.

While there is an abundant literature on compatible mesh-based discretizations, this is not the case for meshfree methods. To the best of our knowledge, the extant work comprises the meshless volume scheme \cite{Katz_09_AIAA}, the Uncertain Grid Method (UGM) \cite{Diyankov_08_TECHREP}, the meshfree framework for conservation laws in \cite{Jameson_12_SISC}, and a few related references. 
Of particular interest to us are \cite{Diyankov_08_TECHREP} and \cite{Jameson_12_SISC} because, as in the second MMD instance, they define the necessary metric data in a purely algebraic fashion without assuming a background grid. 

The UGM appears to be the first example of a locally conservative mesfree scheme, which uses virtual geometric entities characterized  algebraically by numbers representing their ``measures''.  In UGM, the term ``uncertain'' refers to an ``uncertain face'' between two neighboring particles, which is similar to our virtual faces.
The meshfree framework in \cite{Jameson_12_SISC} also seeks sets of numbers that can be interpreted as measures of virtual volumes, centered at each particle, and the areas of their virtual faces, respectively. 
The single most important difference between these works and our approach is in the type of algebraic problems that generate the necessary metric data. 
Both \cite{Diyankov_08_TECHREP} and \cite{Jameson_12_SISC}  find this data  by solving global constrained optimization problems. In UGM this problem is a linear program solved by a primal-dual logarithmic barrier method, whereas in \cite{Jameson_12_SISC}  it is a quadratic program (QP) requiring a QP-specific solver.
In contrast, our approach requires solution of several graph Laplacian problems, which can be accomplished in linear time.  

We have organized the paper as follows. Section \ref{sec:notation} introduces notation and reviews the necessary technical background. 
Section \ref{sec:abstract} formulates the abstract MMD operator and states general conditions for its consistency and accuracy, while Section \ref{sec:MMD-instances} presents its two instances. Efficient  and scalable computation of the virtual face moments necessary for the second MMD instance is discussed in Section \ref{sec:metric}.
In Section \ref{sec:apply} we use the MMD operator to discretize a model conservation law problem and then in Section \ref{sec:num} we provide some representative numerical examples.
We summarize our findings and discuss future work in Section \ref{sec:concl}.

\section{Preliminaries}\label{sec:notation}
\subsection{Notation}\label{sec:notation-general}
Throughout the paper upper case fonts are reserved for function spaces, operators and sets of various entities, while lower case fonts stand for scalar fields, linear functionals, indices, etc.  
We denote the standard Euclidean norm on $\mathbb{R}^n$ by $|\cdot|$ and use bold face fonts to denote vector quantities, e.g., $\bm{x}=(x_1,\ldots,x_d)$ is a point in the Euclidean space $\mathbb{R}^d$, $\bm{e}_{ij}=(\bm{x}_i,\bm{x}_j)$ is an edge connecting two such points, $\bm{u}=(u_1,\ldots,u_d)$ is a vector field in $\mathbb{R}^d$, and $\bm{n}=(n_1,\ldots,n_d)$ is a unit normal vector, i.e., $|\bm{n}| = 1$. 

As usual,  $C^k(\Omega)$ is the space of all $k$-continuously differentiable functions, and $P_m(\mathbb{R}^d)$, or simply $P_m$ is the space of all multivariate polynomials of degree less than or equal to $m$. 
We denote the standard norm on $C^k(\Omega)$ and its restriction to $\omega\subset\Omega$ by
$
\| \cdot \|_{C^k(\Omega)}
$
and
$
\| \cdot \|_{C^k(\omega)},  
$
respectively.

\begin{figure}[h!]
  \centering
  \includegraphics[height=1.5in]{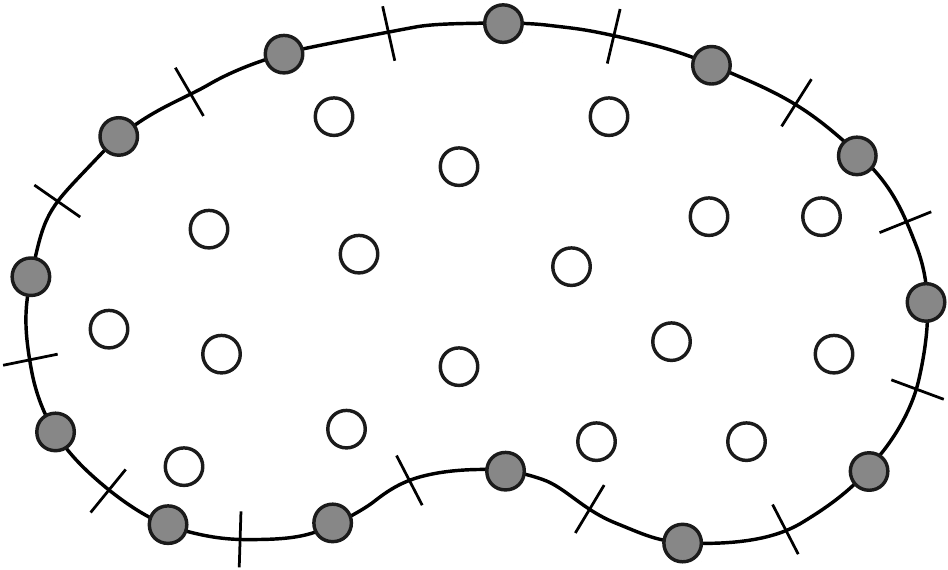} 
  \caption{Point cloud discretization of a domain $\Omega$. The solid dots represent the boundary points forming the set $X_\Gamma$ whereas the circles represent the interior points forming the set $X_0$.
}
  \label{fig:domain}
\end{figure}

Let $\Omega \subset \mathbb{R}^d$, $d=2,3$ be a bounded connected region with a Lipschitz continuous boundary $\Gamma=\partial \Omega$. We assume that $\Gamma$ may be partitioned into a collection of $p_{\Gamma}$ faces $\Gamma = \cup \Gamma_j$, each having maximal diameter $h$, measure $\mu_{\Gamma_j}$, centroid $\bm{x}_j$ and outward unit normal $\bm{n}_j$. The collection of these centroids forms the set ${X}_{\Gamma}$ of all \textit{boundary particles}. In the interior of the domain, we assume a set ${X}_0$ containing $p_0$ \textit{interior particles}. The union ${X} = {X}_0 \cup {X}_{\Gamma}$ has $p_{\Omega}=p_0+p_{\Gamma}$ particles and defines the meshfree discretization of $\Omega$. 
Since by assumption $\Omega$ is bounded we can choose the coordinate system in such a way that the point cloud is strictly contained in the first quadrant, i.e., ${x}_k > 0$, $k=1,\ldots,d$ for all $\bm{x} \in {X}$. We denote the subset of $X$ contained in an entity $\omega$ by $X_\omega := \omega\cap X$. Whenever required, boundary conditions will be imposed on  simply connected non-empty subsets $\gamma\subseteq\Gamma$.  Without loss of generality we shall assume that  $\gamma$ and its complement $\gamma^\prime:=\Gamma\setminus\gamma$ are exact unions of boundary segments $\Gamma_j$ and define their indicator functions as
$$
\chi_i (\gamma) = 
\left\{
\begin{array}{rl}
\Gamma_i & \mbox{if $\bm{v}_i\in\gamma$} \\[0.5ex]
\emptyset  & \mbox{otherwise}
\end{array}
\right. 
\quad\mbox{and}\quad
\chi_i (\gamma^\prime) = 
\left\{
\begin{array}{rl}
\Gamma_i & \mbox{if $\bm{v}_i\in\gamma^\prime$} \\[0.5ex]
\emptyset  & \mbox{otherwise}
\end{array}
\right. ,
$$
respectively. The quality of the point cloud ${X}$ can be characterized by its \textit{fill distance} \cite[p.14]{WendlandBook} given by 
\begin{equation}\label{eq:fill}
h_{X} = 
\sup_{\bm{x} \in \Omega}
\min_{\bm{x}_i \in {X}}
| \bm{x} - \bm{x}_i |,
\end{equation}
and its \textit{separation distance} \cite[p.41]{WendlandBook} defined as 
$$
q_{{X}} = \frac12 \min_{i \neq j} | \bm{x}_i - \bm{x}_j | ,
$$
We shall assume that ${X}$ is \textit{quasi-uniform}, namely that there exists $c_{qu} > 0$ such that
\begin{equation}\label{eq:quasi}
q_{{X}} \leq h_{X} \leq c_{qu} q_{{X}}.
\end{equation}

We denote the spaces of all discrete scalar and vector fields on $X$ by $S^h$ and $U^h$, respectively. These fields represent functions by their point samples on $X$, which we group as
$$
\begin{aligned}
S^h\ni u^h & =\{ u_{i} \,|\, \bm{x}_i \in X_0\}\cup 
\{ u_{i} \,|\, \bm{x}_i \in X_{\gamma^\prime}\} 
\cup \{ u_{i} \,|\, \bm{x}_i \in X_{\gamma}\}
\in \mathbb{R}^{p_\Omega }, \quad\mbox{and} 
\\[0.5ex]
U^h \ni \bm{u}^h & =\{ \bm{u}_{i} \,|\, \bm{x}_i \in X_0\}\cup 
\{ \bm{u}_{i} \,|\, \bm{x}_i \in X_{\gamma^\prime}\} \cup
\{ \bm{u}_{i} \,|\, \bm{x}_i \in X_{\gamma}\}
\in \mathbb{R}^{dp_\Omega} \,,
\end{aligned}
$$
respectively. Note that $\bm{u}^h=(u^h_1,\ldots,u^h_d)$, where $u^h_k\in S^h$, $k=1,2,\ldots, d$. 
We denote by $S^h_{\gamma}$ and $U^h_\gamma$ the sets of all discrete scalar and vector fields defined by their point values in $X_0\cup X_{\gamma^\prime}$, i.e.,
$$
\begin{aligned}
S^h_\gamma\ni u^h & =\{ u_{i} \,|\, \bm{x}_i \in X_0\}\cup 
\{ u_{i} \,|\, \bm{x}_i \in X_{\gamma^\prime}\} 
\in \mathbb{R}^{p_\Omega - p_{\gamma}}, \quad\mbox{and} 
\\[0.5ex]
U^h_\gamma\ni \bm{u}^h & =\{ \bm{u}_{i} \,|\, \bm{x}_i \in X_0\}\cup 
\{ \bm{u}_{i} \,|\, \bm{x}_i \in X_{\gamma^\prime}\} 
\in \mathbb{R}^{d(p_\Omega - p_{\gamma})} \,.
\end{aligned}
$$
We use $S^h_{\gamma}$ or $U^h_\gamma$  whenever a scalar field $u(\bm{x})$ or a vector field  
$\bm{u}(\bm{x})$ is prescribed on $\gamma$. Let 
$$
\| u^h \|_{\ell^\infty, X_\omega} := \max_{\bm{x}_i\in X_\omega} |u^h_i | \,.
$$
We equip $S^h$ and $S^h_\gamma$ with the norms
$$
\| u^h \|_{S^h}  := \|u^h \|_{\ell^\infty, X}
\quad\mbox{and}\quad
\| u^h \|_{S^h_\gamma} := \|u^h \|_{\ell^\infty, X\setminus X_{\gamma}}
$$
respectively, and similarly for $U^h$ and $U^h_\gamma$. 
%

\subsection{GMLS essentials} \label{sec:GMLS}

Generalized moving least squares (GMLS) is a non-parametric regression technique for the approximation of bounded linear functionals from scattered samples of their arguments \cite{WendlandBook}. GMLS has been used in meshfree collocation schemes  to approximate point values of derivatives; see  \cite{mirzaei2011}, \cite{Mirzaei_12_IMAJNA}, \cite{trask_17_sisc} and the references therein. 
Here we shall apply GMLS to approximate integrals as our goal is to develop a meshfree  divergence operator based on the definition \eqref{eq:d}.
We provide a brief summary of the GMLS framework, specialized to our needs. The abstract GMLS setting comprises the following key ingredients; see \cite[Section 4.3]{WendlandBook}:
\begin{itemize}
\setlength\itemsep{0.4ex}
\item a function space $U$ with a dual $U^*$;
\item a finite dimensional space $\Phi=\mbox{span}\{\phi_1,\ldots,\phi_q\}\subset U$; 
\item a finite set of linear functionals $\Lambda=\{\lambda_1,\ldots,\lambda_n\}\subset U^*$; and 
\item a correlation (kernel) function $w:U^*\times U^*\mapsto \mathbb{R}^+\cup\{0\}$.
\end{itemize}
The set $\Lambda$ is assumed to be $\Phi$-unisolvent, that is
\begin{equation}\label{eq:P-uni}
\{\phi\in \Phi\,|\, \lambda_i(\phi) = 0, i=1,\ldots,n\} = \{0\}.
\end{equation}
Given a target functional $\tau\in U^*$ and an arbitrary $u\in U$, GMLS seeks an approximation $\widetilde{\tau}(u)$ of $\tau(u)$ in terms of the sample set $\bm{\lambda}(u)=(\lambda_1(u),\ldots,\lambda_n(u))\in \mathbb{R}^n$, such that  
$\widetilde{\tau}(\phi) = \tau(\phi)$ for all $\phi\in\Phi$. We call such a GMLS approximation $\Phi$-reproducing.
To describe the GMLS solution to this problem we introduce the vector $\bm{\tau}(\phi)\in \mathbb{R}^q$ with elements
$$
(\bm{\tau}(\phi))_i = \tau(\phi_i),\quad i=1,\ldots, q,
$$
the diagonal weight matrix  $W(\tau)\in \mathbb{R}^{n\times n}$ with element $W_{ii}(\tau)=w(\tau;\lambda_i)$, and the basis sample matrix $B \in \mathbb{R}^{n\times q}$ with element
$$
B_{ij} = \lambda_{i}(\phi_j); \quad i=1,\ldots, n; \ \ j=1,\ldots,q .
$$
In what follows, $|\cdot |_{W(\tau)}$ denotes the Euclidean norm on $\mathbb{R}^n$ weighted by $W(\tau)$, i.e.,
$$
|\bm{b} |^2_{W(\tau)} = \bm{b}^{\intercal} W(\tau) \bm{b} \quad \forall \bm{b}\in\mathbb{R}^n \,.
$$
One can then show that the $\Phi$-reproducing GMLS approximation of the target functional is given by
\begin{equation}\label{taudef}
\widetilde{\tau}(u) := \bm{c}(u)\cdot \bm{\tau}(\phi),
\end{equation}
where the GMLS coefficient vector $\bm{c}(u) \in\mathbb{R}^q$ solves the weighted least-squares problem
\begin{equation}\label{eq:gmls-b}
\bm{c}(u)
= \argmin_{\bm{c}\in\mathbb{R}^q}\frac12
\left | B\bm{c} -  \bm{\lambda}(u)  \right |^2_{W(\tau)}
\end{equation}
It is not hard to see that
\begin{equation}\label{gmlscoeffs}
\bm{c}(u) =\left(B^{\intercal}W(\tau)B\right)^{-1}  (B^\intercal W(\tau))\bm{\lambda}(u)\,.
\end{equation}
 We refer to previous work \cite{trask_17_sisc} for details regarding the efficient and stable calculation of $\bm{c}(u)$.


\subsection{Mimetic divergence operator}\label{sec:mimeticdiv}
In Section \ref{sec:abstract} we define an abstract meshfree  divergence operator  by mimicking a mimetic divergence operator on a primal-dual mesh \cite{Nicolaides_92_SINUM,Nicolaides_97_SINUM,Desburn_05_ARXIV}.
Remark \ref{rem:primal_dual} explains the appropriateness of this choice. To describe the template operator assume  that  $\Omega$ is equipped with discretization infrastructure comprising
\begin{itemize}
\setlength\itemsep{0.3ex}
\item a primal mesh $\tau_h(V,E)$ with vertices $V=\{\bm{v}_i\}_{i=1}^{p_V}$ and edges $E=\{\bm{e}_{ij}\}$; $\bm{e}_{ij} = (\bm{v}_i,\bm{v}_j) \in V\times V$,
\item a topologically dual mesh $\tau_h^{\prime}(C,F)$ with cells $C=\{\omega_i\}_{i=1}^{p_V}$ and affine faces $F=\{\bm{f}_{ij}\}$, and
\item a pair of discrete spaces $\widetilde{U}^h_\gamma$ and $\widetilde{S}^h$ with elements
\begin{equation}\label{eq:dual_fields}
\bm{u}^h =\{ u_{ij}\in\mathbb{R}\,|\, \bm{f}_{ij}\in F\} \cup \{u_{\Gamma_i}\in\mathbb{R}\,|\, \Gamma_i\in \gamma^\prime\}
\quad\mbox{and}\quad
{s}^h =\{ \phi_i \in \mathbb{R}\,|\, \omega_i\in C\},
\end{equation}
respectively, representing the domain and the range of the discrete divergence, respectively. 
\end{itemize}
Every dual cell $\omega_i$ corresponds to a primal vertex $\bm{v}_{i}$ and has a measure $\mu_i$. We assume that every dual face $\bm{f}_{ij}$ corresponds to a primal edge $\bm{e}_{ij}$, has a unit normal $\bm{n}_{ij} = \bm{e}_{ij}/|\bm{e}_{ij}|$, and oriented measure $\mu_{ij} = \int_{\bm{f}_{ij}}dS$.
If $\omega_i$ and $\omega_j$ are the dual cells corresponding to the endpoints of this edge, then $\bm{f}_{ij} = \partial\omega_i\cap\partial\omega_j$. 
%
The space $\widetilde{U}^h_\gamma$ approximates vector fields in $H^0_\gamma(div,\Omega)=\{\bm{u}\in H(div,\Omega)\cap (C^0(\Omega))^d\,|\,  \mbox{$\bm{u}=\bm{u}_\gamma$ on $\gamma$}\}$ by their scalar projections onto $\bm{n}_{ij}$ or $\bm{n}_{ji}$, whereas $\widetilde{S}^h$ approximates $L^2(\Omega)$ functions  by their dual cell averages. Without a loss of generality we shall assume that the restriction $\mathcal{R}(\bm{u})\in \widetilde{U}^h_\gamma$ of a vector field $\bm{u}$ is given by $u_{ij}=u_{ji} = \bm{u}\cdot\bm{n}_{ij}$ for $i<j$.
The neighborhood of $\bm{v}_i$ is defined as 
$$
V_{i} = \{ \bm{v}_j\in V\,|\, \bm{e}_{ij}=(\bm{v}_i,\bm{v}_j)\in E\} 
$$ 
and contains all mesh vertices connected to $\bm{v}_i$ by an edge. 
%
%
%
If a vertex $\bm{v}_i\in\Gamma$ then the boundary of its dual cell $\omega_i$ has a non-empty intersection with $\Gamma$, which we denote as $\Gamma_i := \partial\omega_i\cap \Gamma$. As in Section \ref{sec:notation-general} we shall always assume that $\gamma$ and $\gamma^\prime$ are  exact unions of such segments.
It follows that
\begin{equation}\label{eq:cell_bdry}
\partial\omega_i 
=\Big(\bigcup_{j\in V_i} \bm{f}_{ij}\Big) \cup \chi_i(\gamma^\prime) \cup \chi_i(\gamma) \,.
\end{equation}
%
Restriction of \eqref{eq:d} to dual cells $\omega_i$, followed by approximation of the integrals on $\partial\omega_i$ by single point quadrature yields a discrete divergence operator $DIV : \widetilde{U}^h_\gamma \rightarrow \widetilde{S}^h$, whose action on 
$\bm{u}^h\in \widetilde{U}^h_\gamma$ is given by   
\begin{equation}\label{eq:mimetic_div}
(DIV \bm{u}^h)_i :=
\frac{1}{\mu_i}\left[
\sum_{\bm{f}_{ij}\in\partial\omega_i} u_{ij} \mu_{ij} +
u_{\chi_i(\gamma^\prime)} \mu _{\chi_i(\gamma^\prime)} +
\int_{\chi_i(\gamma)}\bm{u}\cdot\bm{n} dS
\right] \qquad\forall\omega_i\in C .
\end{equation}
In this operator
%
\begin{itemize}
\setlength\itemsep{0.3ex}
\item the \emph{field data} is provided by the degrees-of-freedom in $\widetilde{U}^h_\gamma$, i.e., the sets of numbers $\{u_{ij}\}$ and $\{u_{\Gamma_i}\}$, such that $u_{ij} = u_{ji}$;
\item the \emph{metric data} is provided by the cell volumes and the oriented face areas in the dual mesh, i.e., the sets of numbers $\{\mu_i\}$, $\{\mu_{ij}\}$ and $\{\mu_{\Gamma_i}\}$, respectively, such that $\mu_{ij} = -\mu_{ji}$; 
\item the \emph{topological data} is provided by formula \eqref{eq:cell_bdry}. 
\end{itemize}

\begin{remark}\label{rem:field_data}
In mixed discretizations \cite{Brezzi_91_BOOK} of, e.g., diffusion problems , $\{u_{ij}\}$ and $\{u_{\Gamma_i}\}$ are degrees-of-freedom describing an $H(div)$-conforming discretization of a flux field. In contrast, in finite volume discretizations of conservation laws \cite{Leveque_02_BOOK} the sets $\{u_{ij}\}$ and $\{u_{\Gamma_i}\}$ represent values derived from another set of degrees-of-freedom, usually describing a scalar dependent variable such as a pressure field.
\end{remark}

One can show that \eqref{eq:mimetic_div} is first-order accurate, i.e., for all sufficiently regular vector fields $\bm{u}$ there holds
\begin{equation}\label{eq:mimetic_div_accuracy}
(DIV \mathcal{R}(\bm{u}))_i = \nabla\cdot\bm{u}(\bm{x}_i) + O(h) \,.
\end{equation}
The following result confirms that \eqref{eq:mimetic_div} satisifes a discrete divergence theorem, i.e., that it is compatible. 

\begin{lemma}\label{lem:mimetic_property}
Consider a collection $C_\omega$ of dual cell indices and define  $\omega\subseteq\Omega$ as
$$
\omega = \bigcup_{i \in C_\omega} \omega_i  \,.
$$
Let $\partial\omega_0 = \partial \omega\cap\Omega$, $\partial\omega_\gamma = \partial\omega\cap\gamma$, and  $\partial\omega_{\gamma^\prime} = \partial\omega\cap\gamma^\prime$. Then, 
\begin{equation}\label{eq:mimetic_property}
\sum_{i \in C_\omega} \mu_i (DIV \bm{u}^h)_i 
=
\sum_{\bm{f}_{ij} \in \partial\omega_0}  {u}_{ij}{\mu}_{ij} +
\sum_{\Gamma_k\in \partial\omega_{\gamma^\prime}} u_{\Gamma_k} \mu _{\Gamma_k} +
\sum_{\Gamma_k\in \partial\omega_\gamma} \int_{\Gamma_k} \bm{u}\cdot\bm{n}_k dS \,.
\end{equation}
\end{lemma}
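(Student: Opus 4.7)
The plan is a direct telescoping argument. Substitute the defining formula \eqref{eq:mimetic_div} into the left-hand side of \eqref{eq:mimetic_property}; the cell measures $\mu_i$ cancel at once, leaving
\begin{equation*}
\sum_{i \in C_\omega} \mu_i (DIV \bm{u}^h)_i
= \sum_{i\in C_\omega}\sum_{\bm{f}_{ij}\in\partial\omega_i} u_{ij}\mu_{ij}
+ \sum_{i\in C_\omega} u_{\chi_i(\gamma^\prime)} \mu _{\chi_i(\gamma^\prime)}
+ \sum_{i\in C_\omega} \int_{\chi_i(\gamma)}\bm{u}\cdot\bm{n}\, dS.
\end{equation*}
The whole proof reduces to showing that, after reorganizing the first double sum, the contributions from faces interior to $\omega$ cancel and only the three boundary contributions on the right-hand side of \eqref{eq:mimetic_property} survive.

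First I would partition the dual faces appearing in the double sum into two classes: class (i) faces $\bm{f}_{ij}$ such that both $\omega_i$ and $\omega_j$ lie in $C_\omega$, and class (ii) faces such that exactly one of $\omega_i,\omega_j$ lies in $C_\omega$. Any face $\bm{f}_{ij}$ of class (i) is contained in the interior of $\omega$ and appears exactly twice in the double sum: once from $\omega_i$ with weight $u_{ij}\mu_{ij}$ and once from $\omega_j$ with weight $u_{ji}\mu_{ji}$. Using the symmetry $u_{ij}=u_{ji}$ inherited from the definition of $\widetilde{U}^h_\gamma$, together with the orientation convention $\mu_{ij}=-\mu_{ji}$ stated in Section \ref{sec:mimeticdiv}, these two contributions annihilate each other. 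Any face of class (ii) lies in $\partial\omega \cap \Omega = \partial\omega_0$ and appears exactly once, producing the first sum on the right-hand side of \eqref{eq:mimetic_property}.

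For the two remaining sums I would use the fact that every non-empty $\chi_i(\gamma^\prime)$ or $\chi_i(\gamma)$ is, by construction, a boundary segment $\Gamma_k\subset\Gamma$ that lies in $\partial\omega_i\cap\partial\omega$. Hence summing over $i\in C_\omega$ is tantamount to summing over the segments $\Gamma_k$ that together exhaust $\partial\omega_{\gamma^\prime}$ and $\partial\omega_\gamma$, respectively, yielding the second and third terms of the right-hand side of \eqref{eq:mimetic_property}.

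The only real obstacle is bookkeeping: making the class-(i)/class-(ii) partition precise in a way that each interior face is counted exactly twice (with opposite signs) and each boundary face is counted exactly once. This is where the orientation convention $\mu_{ij}=-\mu_{ji}$ and the boundary decomposition \eqref{eq:cell_bdry} are indispensable, and I would highlight both when writing the argument out in full.
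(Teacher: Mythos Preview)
Your proposal is correct and follows essentially the same approach as the paper: substitute \eqref{eq:mimetic_div}, observe that the double sum over interior faces telescopes because $u_{ij}=u_{ji}$ and $\mu_{ij}=-\mu_{ji}$, and identify the surviving terms with the three boundary sums. The paper's proof is slightly terser (it does not explicitly name the class-(i)/class-(ii) partition), but the argument is the same.
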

\begin{proof}
Summing \eqref{eq:mimetic_div} over the cells in $\omega$ gives
$$
\begin{array}{rcl}
\displaystyle
\sum_{i \in C_\omega} \mu_i (DIV \bm{u}^h)_i 
&=&\displaystyle
\sum_{i \in C_\omega}
\left[
\sum_{\bm{f}_{ij}\in\partial\omega_i} u_{ij} \mu_{ij} +
u_{\chi_i(\gamma^\prime)} \mu _{\chi_i(\gamma^\prime)} +
\int_{\chi_i(\gamma)}\bm{u}\cdot\bm{n} dS
\right] \\[3ex]
\qquad
&=&\displaystyle
\sum_{i \in C_\omega}
\sum_{\bm{f}_{ij}\in\partial\omega_i} u_{ij} \mu_{ij} +
\sum_{\Gamma_k\in \partial\omega_{\gamma^\prime}} u_{\Gamma_k} \mu _{\Gamma_k}+
\sum_{\Gamma_k\in \partial\omega_\gamma} \int_{\Gamma_k} \bm{u}\cdot\bm{n}_k dS \,.
\end{array}
$$
Since $\mu_{ij}=-\mu_{ji}$ all terms in the double sum above corresponding to pairs of adjacent cells cancel each other, leaving only the terms for which $\bm{f}_{ij}\in \partial\omega_0$. This completes the proof. 
\end{proof}
Global conservation in $\Omega$ a direct consequence of Lemma \ref{lem:mimetic_property}. For simplicity we state it with $\gamma=\Gamma$. 

\begin{corollary}\label{cor:mimetic_property_global}
Assume that $\bm{u}^h\in \widetilde{U}^h_\Gamma$ and let  $\bm{u}\in H_\Gamma(div,\Omega)$ be given. Then,
\begin{equation}\label{eq:mimetic_property_global}
\sum_{\omega_i\in C} \mu_i (DIV \bm{u}^h)_i 
=\int_{\Gamma} \bm{u}\cdot\bm{n}\, dS \,.
\end{equation}
\end{corollary}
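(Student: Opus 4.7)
The plan is to deduce the corollary as a direct specialization of Lemma \ref{lem:mimetic_property}. First I would take the collection $C_\omega$ to be all of $C$, so that the associated region is $\omega = \bigcup_{\omega_i\in C}\omega_i = \Omega$ and hence $\partial\omega = \Gamma$.

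Next I would analyze how the three boundary pieces in the lemma degenerate under this choice together with the hypothesis $\gamma = \Gamma$. Since $\Omega$ is open, $\partial\omega_0 = \partial\Omega \cap \Omega = \emptyset$, so the first sum in \eqref{eq:mimetic_property} involving interior virtual faces $\bm{f}_{ij}$ is vacuous (equivalently, every dual face is shared by exactly two dual cells and contributes with opposite orientations $\mu_{ij} = -\mu_{ji}$, so it cancels in the global sum). Because $\gamma = \Gamma$, the complement $\gamma^\prime = \emptyset$, and thus $\partial\omega_{\gamma^\prime} = \emptyset$, killing the second sum involving the prescribed boundary values $u_{\Gamma_k}$. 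Finally $\partial\omega_\gamma = \Gamma$, which is by assumption an exact union of the segments $\Gamma_k$.

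Substituting into \eqref{eq:mimetic_property} then gives
$$
\sum_{i \in C} \mu_i (DIV\,\bm{u}^h)_i
=\sum_{\Gamma_k\subseteq \Gamma} \int_{\Gamma_k} \bm{u}\cdot\bm{n}_k\, dS
= \int_{\Gamma} \bm{u}\cdot\bm{n}\, dS,
$$
which is exactly \eqref{eq:mimetic_property_global}. There is no real obstacle here; the only subtle point worth spelling out is why the interior-face contribution vanishes when $\omega = \Omega$, which I would mention explicitly since it is the mechanism that makes the method locally (and hence globally) conservative.
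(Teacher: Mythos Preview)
Your proposal is correct and matches the paper's own proof essentially verbatim: the paper also sets $\omega=\Omega$ in Lemma~\ref{lem:mimetic_property} and notes that $\partial\omega_0=\emptyset$, $\partial\omega_{\gamma^\prime}=\emptyset$, and $\partial\omega_\gamma=\Gamma$. Your additional remark about the interior-face cancellation is fine but redundant, since that telescoping is already the content of Lemma~\ref{lem:mimetic_property} itself.
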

\begin{proof}
The result follows by setting $\omega=\Omega$ in Lemma \ref{lem:mimetic_property} and noting that 
$\partial\omega_0 = \emptyset$, $\partial\omega_{\gamma^\prime}=\emptyset$, and $\partial\omega_{\gamma}=\Gamma$. 
\end{proof}
%

\section{An abstract meshfree mimetic divergence operator}\label{sec:abstract}
We consider a meshfree discretization infrastructure comprising
\begin{itemize}\setlength\itemsep{0.3ex}
\item a point cloud $X=X_0\cup X_\Gamma$, representing the computational domain $\Omega$, and 
\item discrete spaces $S^h$ and $U^h_\gamma$, representing scalar and vector fields by their point samples on the cloud.
\end{itemize}
Our goal is to endow this infrastructure with an abstract meshfree mimetic divergence (MMD) operator whose properties mirror those of its mesh-based prototype \eqref{eq:mimetic_div}. Thus, we seek a mapping ${DIV}: U^h_\gamma \rightarrow S^h $ that is  first-order accurate and satisfies a discrete divergence theorem. 
We shall define this mapping by mimicking the action of \eqref{eq:mimetic_div} which assigns to every dual cell $\omega_i$ a \emph{contraction} of the field and metric data living on the faces in $\partial\omega_i$, and weighted by the reciprocal of the cell's measure. 
This task requires appropriate abstractions for the field and metric data, as well as a suitable notion of a boundary operator. 

We shall define this  operator through a \emph{virtual} primal-dual mesh complex. To reduce notational clutter we  reuse the nomenclature from Section \ref{sec:mimeticdiv}.  
We call a collection of integer indices $V=\{i\}_{i=1}^{p_V}$ a \emph{virtual} vertex set and refer to its elements as $\bm{v}_i$. 
A virtual edge is simply an ordered pair of virtual vertices, i.e., 
$$
\bm{e}_{ij} =\left\{ (ij) \,\big|\, (ij)\in V\times V \right\} \,.
$$ 
A collection of $p_E$ virtual edges forms a set $E$, which together with $V$ defines the virtual primal mesh $\tau_h(V,E)$. 
To every $\bm{v}_i \in V$  and $\bm{e}_{ij}\in E$ we assign a virtual dual cell $\omega_i$ and a virtual dual face $\bm{f}_{ij} $, respectively. The sets of these dual entities are denoted by $C$ and $F$, respectively, and they form a virtual topologically dual mesh $\tau^\prime_h(C,F)$. 
A practically useful virtual boundary operator $\partial C\rightarrow F$ should be able to recover the physical domain boundary $\Gamma$. This requires some degree of association between the virtual mesh structures and the  physical domain $\Omega$. 
A simple and effective way to establish such an association is to tie every virtual vertex to a point in the cloud $X$. This induces a partition of $V$ into \emph{interior} vertices $V_0=X_0$ and boundary vertices $V_\Gamma=X_\Gamma$, and prompts the following definition:
\begin{equation}\label{eq:virtual_bdry}
\partial\omega_i =
\left\{
\begin{array}{ll}
\{\bm{f}_{ij}\}, \ j\in V_i & \mbox{if $i\in V_0$} \\[2ex]
\{\bm{f}_{ij}\}\cup \Gamma_i, \ j\in V_i & \mbox{if $i\in V_\Gamma$} 
\end{array}
\right.; \quad\forall\omega_i\in C.
\end{equation}
This definition allows us to write the virtual cell boundary in the same form as in \eqref{eq:cell_bdry}, i.e.,
\begin{equation}\label{eq:cell_bdry_virt}
\partial\omega_i 
=\Big(\bigcup_{j\in V_i} \bm{f}_{ij}\Big) \cup \chi_i(\gamma^\prime) \cup \chi_i(\gamma) \,.
\end{equation}
Note that while $\omega_i$ is a purely virtual entity, its boundary may contain ``real'' geometric entities from $\Gamma$.

Consider next the field data abstraction. To mimic \eqref{eq:mimetic_div} the domain of the MMD operator should be associated with the virtual faces $F$, whereas its range should contain fields living on the virtual cells $C$. 
To ensure first-order accuracy the latter should be piecewise constant with respect to $C$. Thus, we can reuse the definition of $\widetilde{S}^h$ from  \eqref{eq:dual_fields} with the understanding that each degree-of-freedom is now assigned to a virtual, rather than to a physical, dual cell. 
Likewise, the domain space should be able to reproduce exactly the preimage of $\widetilde{S}^h$, i.e., any vector field $\bm{u}$ such that $\nabla\cdot\bm{u}\in \widetilde{S}^h$. In the mesh-based case this can be accomplished by a single\footnote{For example, the lowest-order Raviart-Thomas elements on $d$-simplices \cite{Raviart_77_INPROC}, also known as Whitney 2-forms \cite{Bossavit_88_IEE},  are incomplete linear polynomial fields of the form $\bm{a} + b\bm{x}$ which have $d+1$ degrees-of-freedom.} degree-of-freedom per face. 
However, this may not be enough in the meshfree setting and so we shall allow for $n_F \ge 1$ \emph{field moments} per virtual face $\bm{f}_{ij}\in F$ and boundary segment $\Gamma_i\in\gamma$. 
%
In sum, we define the domain $\widetilde{U}^h_\gamma$ and the range $\widetilde{S}^h$ for the abstract MMD operator by modifying \eqref{eq:dual_fields} to 
\begin{equation}\label{eq:dual_fields_virtual}
 \bm{u}^h =\{ \bm{u}_{ij}\in\mathbb{R}^{n_F}\,|\, \bm{f}_{ij}\in F\} \cup 
\{\bm{u}_{\Gamma_i}\in\mathbb{R}^{n_F}\,|\, \Gamma_i\in \gamma^\prime\}
\quad\mbox{and}\quad
s^h =\{ \phi_i \in \mathbb{R}\,|\, \omega_i\in C\},
\end{equation}
respectively, with the understanding that all dual mesh entities in \eqref{eq:dual_fields_virtual} are virtual. 

Finally, let us consider the abstraction for the metric data.  We seek this abstraction in the form of virtual face and cell \emph{moments}, i.e., sets of numbers assigned to every element of $F$ and $C$. To allow contraction of these moments with the field moments in \eqref{eq:dual_fields_virtual} we chose the former to be the algebraic duals of the latter.
Thus,  we assign a real number $\mu_i$ to every virtual dual cell $\omega_i\in C$, a real vector $\bm{\mu}_{ij}\in\mathbb{R}^{n_F}$ to every virtual dual face $\bm{f}_{ij}\in F$ and a real vector $\bm{\mu}_{\Gamma_i}\in\mathbb{R}^{n_F}$ to every boundary segment $\Gamma_i\in\gamma^\prime$. 
Following \eqref{eq:mimetic_div} we then define an abstract operator $DIV: \widetilde{U}^h_\gamma \rightarrow \widetilde{S}^h $ by contracting the metric and the field data:
\begin{equation}\label{eq:mimetic_div_abs_prelim}
(DIV \bm{u}^h)_i :=
\frac{1}{\mu_i}\left[
\sum_{\bm{f}_{ij}\in\partial\omega_i} \bm{u}_{ij} \cdot \bm{\mu}_{ij} +
\bm{u}_{\chi_i(\gamma^\prime)} \cdot \bm{\mu}_{\chi_i(\gamma^\prime)} +
\int_{\chi_i(\gamma)}\bm{u}\cdot\bm{n} dS
\right] \qquad\forall\omega_i\in C .
\end{equation}
This operator acts on the \emph{field moments}  $\{\bm{u}_{ij}\}$ and $\{\bm{u}_{\Gamma_i}\}$ rather than on the point samples $U^h_\gamma$ that are the ``native'' representation of vector fields on the point cloud. Thus, deployment of \eqref{eq:mimetic_div_abs_prelim} in our meshfree infrastructure requires  a particle-to-virtual face data transfer operator\footnote{Formally we also need a data transfer operator $T_F: \widetilde{S}^h \rightarrow S^h$ to map the discrete divergence back to the cloud points. However, since in the present context  $S^h\equiv \widetilde{S}^h$, this operator is the identity and is omitted for simplicity.}
\begin{equation}\label{eq:point-mesh}
T_{F} : U^h_\gamma \rightarrow \widetilde{U}^h_\gamma;\quad
(T_{F}\bm{u}^h)\big |_{\bm{f}} = \bm{t}_{\bm{f}}(\bm{u}^h)\in \mathbb{R}^{n_F} \,,
\bm{f}\in\{\bm{f}_{ij}\}\cup \{\Gamma_j \,|\, \bm{x}_j\in\gamma^\prime\}\,.
\end{equation}
which maps point samples  into  field moments. Combining the action of this operator with \eqref{eq:mimetic_div_abs_prelim} yields the desired abstract MMD operator $DIV: U^h_\gamma \rightarrow S^h $, where
\begin{equation}\label{eq:mimetic_div_abs}
(DIV \bm{u}^h)_i:=
\frac{1}{\mu_i}\left[
\sum_{\bm{f}_{ij}\in\partial\omega_i} \bm{t}_{ij}(\bm{u}^h) \cdot \bm{\mu}_{ij} +
\bm{t}_{\chi_i(\gamma^\prime)}(\bm{u}^h) \cdot \bm{\mu}_{\chi_i(\gamma^\prime)} +
\int_{\chi_i(\gamma)}\bm{u}\cdot\bm{n} dS
\right] \qquad\forall\omega_i\in C .
\end{equation}
The following section studies the  properties of \eqref{eq:mimetic_div_abs}.


\subsection{Analysis of the abstract MMD operator}\label{sec:theory-abstract-MMD}
%
To ensure that \eqref{eq:mimetic_div_abs} has the same mimetic properties as its mesh-based prototype we require that 
\begin{description}\setlength\itemsep{0.1ex}
\item[T.1] The virtual face volumes satisfy $\mu_i>0$, $\mu_i=O(h^{d}_{X})$, and $\sum_i \mu_i = \mu(\Omega)$.
\item[T.2] The virtual face moments $\{\bm{\mu}_{ij}\}$ are antisymmetric: 
$\bm{\mu}_{ij} = - \bm{\mu}_{ji}$.
\item[T.3] The data transfer operator \eqref{eq:point-mesh} is symmetric:
$
\bm{t}_{ij}(\bm{u}^h) = \bm{t}_{ji}(\bm{u}^h) .
$
\end{description}
\textbf{T.1}  ensures that the virtual volumes behave like physical volumes.
The ``topological'' requirements \textbf{T.2}-\textbf{T.3} are prompted by Lemma \ref{lem:mimetic_property}, which reveals that mimetic properties of \eqref{eq:mimetic_div} hinge on the symmetry of the face degrees-of-freedom $u_{ij}$ and the antisymmetry of the oriented face areas $\mu_{ij}$. The following result shows that \textbf{T.1}-\textbf{T.3} are indeed sufficient for the abstract MMD operator to be locally and globally conservative. 

\begin{theorem}\label{thm:mimetic_div_abs_cons}
Assume that \textbf{T.1}-\textbf{T.3} hold. Then the abstract MMD operator \eqref{eq:mimetic_div_abs} satisfies \eqref{eq:mimetic_property} (local conservation) and \eqref{eq:mimetic_property_global} (global conservation).
\end{theorem}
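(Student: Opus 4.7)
My plan is to replay the proof of Lemma \ref{lem:mimetic_property} in the abstract setting, with the scalar products $u_{ij}\mu_{ij}$ replaced by the contractions $\bm{t}_{ij}(\bm{u}^h)\cdot\bm{\mu}_{ij}$. The assumption \textbf{T.1} makes the operator \eqref{eq:mimetic_div_abs} well-defined because $\mu_i>0$, while the cancellation argument that drives the whole proof will come from \textbf{T.2} and \textbf{T.3}.

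Concretely, I would first multiply \eqref{eq:mimetic_div_abs} by $\mu_i$ and sum over $i\in C_\omega$. Using the boundary decomposition \eqref{eq:cell_bdry_virt}, this produces a double sum of virtual-face contractions indexed by pairs $(i,j)$ with $i\in C_\omega$ and $j\in V_i$, plus boundary terms over $\partial\omega_{\gamma^\prime}$ and $\partial\omega_\gamma$ that are identical in form to those in \eqref{eq:mimetic_property}. I would then group the pairs $(i,j)$ according to whether the associated virtual face $\bm{f}_{ij}$ lies in $\partial\omega_0$ or is shared by two cells in $C_\omega$. For each shared face, the pair $(i,j)$ with $i,j\in C_\omega$ appears twice in the double sum, contributing $\bm{t}_{ij}(\bm{u}^h)\cdot\bm{\mu}_{ij}+\bm{t}_{ji}(\bm{u}^h)\cdot\bm{\mu}_{ji}$. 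Applying \textbf{T.3} to equate $\bm{t}_{ji}(\bm{u}^h)=\bm{t}_{ij}(\bm{u}^h)$ and \textbf{T.2} to flip $\bm{\mu}_{ji}=-\bm{\mu}_{ij}$ shows this sum is zero, so only the faces in $\partial\omega_0$ survive, yielding the abstract analogue of \eqref{eq:mimetic_property}.

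Global conservation then follows as in Corollary \ref{cor:mimetic_property_global}: taking $C_\omega$ equal to the full index set of $C$ gives $\omega=\Omega$, so \textbf{T.1} (more precisely, the requirement that the virtual cells tile $\Omega$ in the sense $\sum_i\mu_i=\mu(\Omega)$) guarantees that $\partial\omega_0=\emptyset$ and $\partial\omega_{\gamma^\prime}=\emptyset$ when $\gamma=\Gamma$. The only surviving contribution is $\sum_{\Gamma_k\subset\Gamma}\int_{\Gamma_k}\bm{u}\cdot\bm{n}_k\,dS=\int_\Gamma\bm{u}\cdot\bm{n}\,dS$, which is precisely \eqref{eq:mimetic_property_global}.

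I do not anticipate a real obstacle: once the cancellation mechanism between \textbf{T.2} and \textbf{T.3} is spelled out, everything reduces to bookkeeping identical to that in the mesh-based case. The only point requiring a bit of care is making sure the statement \eqref{eq:mimetic_property} is read in its abstract incarnation, i.e., with $u_{ij}\mu_{ij}$ replaced by $\bm{t}_{ij}(\bm{u}^h)\cdot\bm{\mu}_{ij}$ and $u_{\Gamma_k}\mu_{\Gamma_k}$ by $\bm{t}_{\Gamma_k}(\bm{u}^h)\cdot\bm{\mu}_{\Gamma_k}$, consistent with how the field and metric moments were defined in \eqref{eq:dual_fields_virtual}; otherwise the theorem's content is really nothing more than the symmetry/antisymmetry cancellation built into \textbf{T.2}--\textbf{T.3}.
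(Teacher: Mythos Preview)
Your proposal is correct and follows exactly the approach of the paper, which simply states that, owing to \textbf{T.2}--\textbf{T.3}, the proof of Lemma~\ref{lem:mimetic_property} applies verbatim, and that global conservation then follows from \textbf{T.1} and \eqref{eq:mimetic_property} as in Corollary~\ref{cor:mimetic_property_global}. One small quibble: the vanishing of $\partial\omega_0$ and $\partial\omega_{\gamma'}$ when $\omega=\Omega$ and $\gamma=\Gamma$ comes from the definitions of those boundary pieces, not from the volume condition in \textbf{T.1}; but this does not affect the argument.
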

\begin{proof}
Owing to \textbf{T.2} and \textbf{T.3} the proof of Lemma \ref{lem:mimetic_property} applies verbatim to the abstract MMD operator as well. The global conservation \eqref{eq:mimetic_property_global} then follows from \textbf{T.1} and \eqref{eq:mimetic_property} as in Corollary \ref{cor:mimetic_property_global}.
\end{proof}

While conservation is a purely ``topological'' property that can be secured by independent conditions on the field and face moments, this is not the case with the consistency of the MMD operator, which requires these moments to work well together. The following two definitions introduce conditions that coordinate the properties of the metric and the field data to ensure the desired first-order accuracy.
\begin{definition} \label{def:consistent}
A set of virtual cell volumes and virtual face moments  $\{\mu_i,\bm{\mu}_{ij}, \bm{\mu}_{\Gamma_i}\}$ and a data transfer operator $T_F:U^h_\gamma \rightarrow \widetilde{U}^h_\gamma$ are called a $P_1$-reproducing pair  if for every $\bm{p}\in (P_1)^d$ and its point sample $\bm{p}^h\in U^h_\gamma$ there holds
\begin{equation}\label{eq:mimetic_div_P1}
\nabla\cdot\bm{p} \,\big |_{\bm{x}_i} = 
\frac{1}{\mu_i}\left[
\sum_{\bm{f}_{ij}\in\partial\omega_i} \bm{t}_{ij}(\bm{p}^h) \cdot \bm{\mu}_{ij} +
\bm{t}_{\chi_i(\gamma^\prime)}(\bm{p}^h) \cdot \bm{\mu}_{\chi_i(\gamma^\prime)} +
\int_{\chi_i(\gamma)}\bm{p}\cdot\bm{n} dS
\right] \qquad\forall\omega_i\in C .
\end{equation}
\end{definition}
In other words, $\{T_F; \{\mu_i,\bm{\mu}_{ij}, \bm{\mu}_{\Gamma_i}\}\}$ is a $P_1$-reproducing pair iff for every 
$\bm{p}\in (P_1)^d$ there holds
$$
(DIV \bm{p}^h)_i = \nabla\cdot\bm{p} \,\big |_{\bm{x}_i} 
\quad\forall \bm{x}_i\in X_0\cup X_{\gamma^\prime} .
$$

\begin{definition}\label{def:locLip}
A set of virtual face moments $\{\bm{\mu}_{ij}, \bm{\mu}_{\Gamma_i}\}$ and a data transfer operator $T_F:U^h_\gamma \rightarrow \widetilde{U}^h_\gamma$ are called a locally Lipschitz-continuous pair on $X$ if there exist balls $\{B_{\bm{f}}^h\}$ of radius $O(h_X)$, centered at the barycenters of the virtual faces $\bm{f}\in F$ and a constant $C>0$, such that for every $\bm{u},\bm{v}\in (C^2(\Omega))^d$ with point samples $\bm{u}^h,\bm{v}^h\in U^h_\gamma$ there holds
\begin{equation}\label{eq:locLip}
| \bm{t}_{\bm{f}}(\bm{u}^h)\cdot \bm{\mu}_{\bm{f}} - \bm{t}_{\bm{f}}(\bm{v}^h)\cdot \bm{\mu}_{\bm{f}} | 
\le C h_{X}^{d-1} \| \bm{u}^h - \bm{v}^h \|_{\ell^\infty, X_{B_{\bm{f}}^h}}\,,
\quad
\forall \bm{f}\in\{\bm{f}_{ij}\}\cup \{\Gamma_j \,|\, \bm{x}_j\in\gamma^\prime\}\,.
\end{equation}
\end{definition}

These two properties are sufficient for the abstract MMD operator to be first-order accurate. 

\begin{theorem}\label{thm:error}
Assume that $\{T_F; \{\mu_i, \bm{\mu}_{ij}, \bm{\mu}_{\Gamma_i}\}\}$ is a  $P_1$-reproducing pair, $\{T_F,\{\bm{\mu}_{ij}, \bm{\mu}_{\Gamma_i}\}\}$ is a locally Lipschitz continuous pair on $X$, and that  \textbf{T.1} holds. 
Then, there exists a constant $C$, independent of the point cloud and such that for every $\bm{u}\in (C^2(\Omega))^d$ and its point sample $\bm{u}^h\in U^h_\gamma$ there holds the error bound
\begin{equation}\label{eq:MMD_abs_error}
\left \| \nabla\cdot\bm{u} - (DIV \bm{u}^h)\right \|_{\ell^\infty,X} 
\le Ch \| \bm{u}\|_{C^2(\Omega)} .
\end{equation}
\end{theorem}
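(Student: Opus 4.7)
\bigskip

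\noindent\textbf{Proof plan.} The plan is to use the classical consistency-via-polynomial-reproduction argument, splitting $\bm{u}$ into its local linearization and a controlled remainder. Fix an index $i$ with $\bm{x}_i\in X_0\cup X_{\gamma^\prime}$ and let $\bm{p}_i\in (P_1)^d$ denote the first-order Taylor polynomial of $\bm{u}$ at $\bm{x}_i$. By Taylor's theorem with remainder,
\begin{equation*}
\| \bm{u}-\bm{p}_i \|_{C^0(B)} \le C h_{X}^{2} \| \bm{u} \|_{C^2(\Omega)}
\end{equation*}
on any ball $B$ of radius $O(h_X)$ centered at $\bm{x}_i$; in particular this holds on each of the balls $B^h_{\bm{f}}$ from Definition \ref{def:locLip} associated with faces $\bm{f}\in\partial\omega_i$ and with boundary segments $\chi_i(\gamma)$, provided the virtual-mesh topology is locally consistent with $h_X$ (a consequence of \textbf{T.1} and quasi-uniformity, which imply that these balls sit within $O(h_X)$ of $\bm{x}_i$).

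Next I would exploit $P_1$-reproduction. Since $\nabla\cdot\bm{p}_i$ is a constant equal to $\nabla\cdot\bm{u}(\bm{x}_i)$, Definition \ref{def:consistent} gives
\begin{equation*}
\nabla\cdot\bm{u}(\bm{x}_i)
=\frac{1}{\mu_i}\!\left[
\sum_{\bm{f}_{ij}\in\partial\omega_i} \bm{t}_{ij}(\bm{p}_i^h)\cdot\bm{\mu}_{ij}
+\bm{t}_{\chi_i(\gamma^\prime)}(\bm{p}_i^h)\cdot\bm{\mu}_{\chi_i(\gamma^\prime)}
+\int_{\chi_i(\gamma)}\bm{p}_i\cdot\bm{n}\, dS
\right].
\end{equation*}
Subtracting this identity from \eqref{eq:mimetic_div_abs} expresses the consistency error at $\bm{x}_i$ as
\begin{equation*}
\nabla\cdot\bm{u}(\bm{x}_i) - (DIV\bm{u}^h)_i
=\frac{1}{\mu_i}\!\left[
\sum_{\bm{f}_{ij}\in\partial\omega_i}\!\bigl(\bm{t}_{ij}(\bm{p}_i^h)-\bm{t}_{ij}(\bm{u}^h)\bigr)\cdot\bm{\mu}_{ij}
+\bigl(\bm{t}_{\chi_i(\gamma^\prime)}(\bm{p}_i^h)-\bm{t}_{\chi_i(\gamma^\prime)}(\bm{u}^h)\bigr)\cdot\bm{\mu}_{\chi_i(\gamma^\prime)}
+\int_{\chi_i(\gamma)}(\bm{p}_i-\bm{u})\cdot\bm{n}\, dS
\right].
\end{equation*}

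Now I would estimate each term. For the face and boundary-segment contributions, Definition \ref{def:locLip} yields
\begin{equation*}
\bigl|\bigl(\bm{t}_{\bm{f}}(\bm{p}_i^h)-\bm{t}_{\bm{f}}(\bm{u}^h)\bigr)\cdot\bm{\mu}_{\bm{f}}\bigr|
\le C h_X^{d-1}\,\|\bm{p}_i^h-\bm{u}^h\|_{\ell^\infty,X_{B_{\bm{f}}^h}}
\le C h_X^{d-1}\cdot h_X^{2}\,\|\bm{u}\|_{C^2(\Omega)}
=C h_X^{d+1}\,\|\bm{u}\|_{C^2(\Omega)},
\end{equation*}
where the second inequality uses the Taylor bound above and the fact that point samples are dominated by $C^0$ norms. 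The exact integral on $\chi_i(\gamma)$ is bounded by $|\chi_i(\gamma)|\cdot\|\bm{u}-\bm{p}_i\|_{C^0(\chi_i(\gamma))} \le C h_X^{d-1}\cdot h_X^{2}\|\bm{u}\|_{C^2(\Omega)}$. Finally, \textbf{T.1} gives $\mu_i\ge c h_X^{d}$, and quasi-uniformity combined with the structure of $\partial\omega_i$ bounds the number of faces incident to $\omega_i$ by a constant independent of $h_X$. Summing these $O(1)$ contributions of size $O(h_X^{d+1}\|\bm{u}\|_{C^2})$ and dividing by $\mu_i=O(h_X^{d})$ produces the bound $C h_X \|\bm{u}\|_{C^2(\Omega)}$, uniformly in $i$, which is \eqref{eq:MMD_abs_error}.

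The main obstacle I anticipate is the bookkeeping that makes the local Lipschitz constant, the radii of the balls $B^h_{\bm{f}}$, and the Taylor remainder domain compatible: one must verify that for every face or boundary segment in $\partial\omega_i$ the associated ball $B^h_{\bm{f}}$ sits within a ball of radius $O(h_X)$ about $\bm{x}_i$, so that the $h_X^2$ Taylor bound can be applied uniformly. The remaining potential subtlety is the bounded-valence claim, which should follow from quasi-uniformity together with \textbf{T.1} but deserves an explicit justification in the two concrete MMD instances.
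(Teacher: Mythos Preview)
Your proposal is correct and follows essentially the same approach as the paper: split off the linear Taylor polynomial $\bm{p}_i$ at $\bm{x}_i$, invoke $P_1$-reproduction to replace $\nabla\cdot\bm{u}(\bm{x}_i)$ by $(DIV\,\bm{p}_i^h)_i$, apply the local Lipschitz bound to the face terms, and use $\mu_i=O(h_X^d)$ from \textbf{T.1}. The paper organizes the argument via an initial triangle inequality $|\nabla\cdot\bm{u}(\bm{x}_i)-\nabla\cdot\bm{p}(\bm{x}_i)|+|(DIV\,\bm{p}^h)_i-(DIV\,\bm{u}^h)_i|$ before specializing $\bm{p}$ to the Taylor polynomial, whereas you observe immediately that $\nabla\cdot\bm{p}_i\equiv\nabla\cdot\bm{u}(\bm{x}_i)$ and subtract directly; this is a cosmetic difference, and your caveats about the ball geometry and bounded valence are points the paper leaves implicit.
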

\begin{proof}
Consider a linear vector field $\bm{p}\in (P_1)^d$ with a point sample $\bm{p}^h\in U^h_\gamma$, and an arbitrary point $\bm{x}_i\in X$. From the triangle inequality and the $P_1$-reproducing pair assumption it follows that
$$
\begin{aligned}
\left | \nabla\cdot\bm{u}(\bm{x}_i) - (DIV \bm{u}^h)_i \right | 
&\le   
\left | \nabla\cdot\bm{u}(\bm{x}_i) - \nabla\cdot\bm{p}(\bm{x}_i)\right | +
\left | \nabla\cdot\bm{p}(\bm{x}_i) - (DIV \bm{u}^h)_i \right | \\
&=
\left | \nabla\cdot\bm{u}(\bm{x}_i) - \nabla\cdot\bm{p}(\bm{x}_i)\right | +
\left | (DIV \bm{p}^h)_i  - (DIV \bm{u}^h)_i \right | 
\end{aligned}
$$
Consider first the case $\chi_i(\gamma^\prime)=\emptyset$. Using \eqref{eq:locLip} then yields the following bound for the second term:
$$
\begin{array}{l}
\displaystyle
\left | (DIV \bm{p}^h)_i  - (DIV \bm{u}^h)_i \right | 
\le 
\displaystyle
\frac{1}{\mu_i}\left[
\sum_{\bm{f}_{ij}\in\partial\omega_i} 
| \bm{t}_{ij}(\bm{p}^h)\cdot \bm{\mu}_{ij}- \bm{t}_{ij}(\bm{u}^h)\cdot \bm{\mu}_{ij} | 
+
\int_{\chi_i(\gamma)}|\bm{p}-\bm{u} | \,dS
\right] \\[4ex]
\quad\le
\displaystyle
\frac1\mu_i\big[
C h^{d-1}_{X}\|\bm{p}^h - \bm{u}^h \|_{\ell^\infty, X_{B^h_{ij}}} 
+ \mu(\chi_i(\gamma))\| \bm{p} - \bm{u} \|_{C^0(\chi_i(\gamma))} \big]
\end{array}
$$
where $\mu(\chi_i(\gamma))$ is the measure of  $\chi_i(\gamma)$. Assumption \textbf{T.1} implies that  $\mu_i = O(h_X^d)$, while $\mu(\chi_i(\gamma))$ is either zero or $O(h_X^{d-1})$. As a result, we have that 
$$
\begin{aligned}
\displaystyle
\left | (DIV \bm{p}^h)_i  - (DIV \bm{u}^h)_i \right |  
&\le 
\displaystyle
C h^{-1}_X 
\left[ 
\|\bm{p}^h - \bm{u}^h \|_{\ell^\infty, X_{B^h_{ij}}}  + 
\| \bm{p} - \bm{u} \|_{C^0(\chi_i(\gamma))}\right]
\\[0.5ex]
&\le\displaystyle
C h^{-1}_X 
\left[
\|\bm{p} - \bm{u}\|_{C^0(B^h_{ij})} + \| \bm{p} - \bm{u} \|_{C^0(\chi_i(\gamma))} 
\right] 
\end{aligned}
$$ 
Taking  $\bm{p}$ to be the linear Taylor polynomial of $\bm{u}$ at the point $\bm{x}_i$ then yields the bound
$$
\|\bm{p} - \bm{u}\|_{C^0(B^h_{ij})} +
\| \bm{p} - \bm{u} \|_{C^0(\chi_i(\gamma))} 
\le Ch^2_X \|\bm{u}\|_{C^2(\Omega)}
$$
for the second term and the bound
$$
\left | \nabla\cdot\bm{u}(\bm{x}_i) - \nabla\cdot\bm{p}(\bm{x}_i)\right |
\le 
Ch_X \|\bm{u}\|_{C^2(\Omega)}
$$
for the first term. This completes the proof for $\chi_i(\gamma^\prime)=\emptyset$. 
When $\chi_i(\gamma^\prime)=\Gamma_i$ the extra term $\|\bm{p}^h - \bm{u}^h \|_{\ell^\infty, X_{B^h_{i}}} $ can be estimated by $\|\bm{p} - \bm{u}\|_{C^0(B^h_{i})}$. The proof follows by combining this and the above bounds. 
\end{proof}

\section{Meshfree mimetic divergence (MMD) instantiations}\label{sec:MMD-instances}

In this section we present two instances of the abstract MMD operator. 
Section \ref{sec:gmlsquad} implements \eqref{eq:mimetic_div_abs} using a background mesh. This \emph{hybrid} formulation is useful for applications where such a grid is available but its quality is poor. For example, the mesh may contain valid, but almost degenerate, ``sliver'' elements that render the mesh-based shape functions nearly singular. 
In this case, a meshfree approach such as \eqref{eq:mimetic_div_abs}  can provide a more robust and accurate alternative to traditional mesh-based discretizations by using the mesh only to supply the metric and topological information but not to define the field data and the operators acting on it. 
Section  \ref{sec:divtheorem} shows how to implement \eqref{eq:mimetic_div_abs} without a background mesh. This MMD instance targets applications where mesh generation creates a computational bottleneck and should be avoided.

\subsection{MMD with a background mesh}\label{sec:gmlsquad}
We assume that the virtual primal-dual mesh complex can be associated with a conforming physical grid on $\Omega$.  
%
%
Existence of a physical dual mesh implies that a set  $\{\mu_i\}$ satisfying \textbf{T.1} is readily available. 
Thus, to instantiate \eqref{eq:mimetic_div_abs} it remains to find a data transfer operator $T_F$ and virtual face moments $\{\bm{\mu}_{ij},\bm{\mu}_{\Gamma_i}\}$ such that assumptions \textbf{T.2}--\textbf{T.3} hold,  $\{T_F; \{\mu_i, \bm{\mu}_{ij}, \bm{\mu}_{\Gamma_i}\}\}$ is a  $P_1$-reproducing pair, and  $\{T_F,\{\bm{\mu}_{ij}, \bm{\mu}_{\Gamma_i}\}\}$ is a locally Lipschitz continuous pair on $X$. 
To that end we shall use GMLS with targets  
\begin{equation}\label{eq:target}
\tau_{\bm{f}}(\bm{u}) = \int_{\bm{f}} \bm{u}\cdot\bm{n}_{\bm{f}}  d S \,,
\end{equation} 
where $\bm{f}$ is a dual face or a segment on $\gamma^\prime$, i.e., $\bm{f}\in F \cup \{\Gamma_i \,|\, \bm{x}_i\in\gamma^\prime\}$. 
We apply GMLS with
\begin{itemize}\setlength\itemsep{0.ex}
\item a function space $U = (C^1(\Omega))^d$; 
\item a reproduction space $\Phi=(P_1)^d$ with dimension $n_{1} = d(d+1)$ and basis $\{\bm{\phi}_s\}_{s=1}^{n_{1}}$ with basis functions
\begin{equation}\label{eq:P1V_basis}
\bm{\phi}_{s}(\bm{x}) = \mathbf{e}_k\, \phi_r(\bm{x});\quad
s=(k-1)(d+1)+r; \quad
k=1,\ldots,d;\  r=1,\ldots, d+1\,,
\end{equation}
where $\mathbf{e}_k$ is the $k$th canonical basis vector in $\mathbb{R}^d$ and $\{\phi_r(\bm{x})\}_{r=1}^{d+1}$ is a basis for  $P_1$;
\item sampling functionals\footnote{We refer to \cite[Chapter 3]{WendlandBook} for conditions on the sampling locations $\{\bm{x}_i\}$ that are sufficient to ensure polynomial reproduction and rigorous approximation error estimates. } $\lambda_i = \delta_{\bm{x}_i}$ mapping vector fields $\bm{u}\in U$ into point samples $\bm{u}^h\in U^h_\gamma$ on the cloud;
\item kernels $w(\tau_{\bm{f}};\lambda_i) = \kappa(|\bm{x}_{\bm{f}} - \bm{x}_i |)$ where $\bm{x}_{\bm{f}}$ is the centroid of  $\bm{f}\in F \cup \{\Gamma_j \,|\, \bm{x}_j\in\gamma^\prime\}$ and $\kappa(\rho) = \left(1-{\rho}/{\epsilon_\kappa}\right)^4_+$, with $\epsilon_\kappa$ is large enough to obtain $P_1$-unisolvency. The face centroids $\{\bm{x}_{\bm f}\}$ are not required to coincide with the midpoints $\{\bm{x}_{ij}\}$ of the corresponding primal edges.
\end{itemize}
With these choices the GMLS approximation of \eqref{eq:target} is exact for linear polynomials and is given by 
\begin{equation}\label{eq:GMLSface}
\tau_{\bm{f}}(\bm{u}) \approx \widetilde{\tau}_{\bm{f}}(\bm{u}^h) 
:= \bm{c}_{\bm{f}}(\bm{u}^h)\cdot \bm{\tau}_{\bm{f}}(\bm{\phi})\,;
\quad\forall
\bm{f}\in F \cup \{\Gamma_j \,|\, \bm{x}_j\in\gamma^\prime\} \,.
\end{equation}
The GMLS coefficients $\bm{c}_{\bm{f}}(\bm{u}^h)\in\mathbb{R}^{n_{1}}$ solve the local weighted least-squares problems 
\begin{equation}\label{eq:gmls-P1}
\bm{c}_{\bm{f}}(\bm{u}^h)
= \argmin_{\bm{b}\in\mathbb{R}^{n_{1}}}\frac12
\left | B\bm{b} -  \bm{u}^h  \right |^2_{W(\tau_{\bm{f}})} \,,
\end{equation}
which transform point samples into ``field moments'' and define a mapping
\begin{equation}\label{eq:point-mesh-GMLS}
T_{F} : U^h_\gamma \rightarrow \widetilde{U}^h_\gamma;\quad
(T_{F}\bm{u}^h)\big |_{\bm{f}} := \bm{c}_{\bm{f}}(\bm{u}^h)\in \mathbb{R}^{n_1}, \ \ 
\forall \bm{f}\in F \cup \{\Gamma_j \,|\, \bm{x}_j\in\gamma^\prime\}\,.
\end{equation}
We chose \eqref{eq:point-mesh-GMLS} as the data transfer operator for  \eqref{eq:mimetic_div_abs}. 
The second term in  \eqref{eq:GMLSface}, i.e., the vector $\bm{\tau}_{\bm{f}}(\bm{\phi})\in \mathbb{R}^{n_{1}}$, contains the integrals of the polynomial basis functions on $\bm{f}$ and defines the \emph{face moments} for \eqref{eq:mimetic_div_abs}. Setting $\bm{\mu}_{ij}= \bm{\tau}_{ij}(\bm{\phi})$ and $\bm{\mu}_{\Gamma_j}= \bm{\tau}_{\Gamma_j}(\phi)$ in \eqref{eq:mimetic_div_abs} we obtain the following \emph{GMLS instance} of the MMD operator:
\begin{equation}\label{eq:div-dis-GMLS}
(\DIVGMLS\bm{u}^h)_i  
= 
\frac{1}{\mu_i} \left[
\sum_{\bm{f}_{ij}\in\partial\omega_i}(\bm{c}_{ij}(\bm{u}^h)) \cdot \bm{\mu}_{ij} +
\bm{c}_{\chi_i(\gamma^\prime)}(\bm{u}^h) \cdot \bm{\mu}_{\chi_i(\gamma^\prime)} +
\int_{\chi_i(\gamma)}\bm{u}\cdot\bm{n} dS
\right] \qquad\forall\omega_i\in C .
\end{equation}
%

\subsubsection{Analysis}\label{sec:MMD-GMLS-analysis}
Since $\mu_i$ are volumes of physical dual cells on a conforming mesh, assumption \textbf{T.1} readily holds. We now check the rest of the assumptions of the abstract theory in \S \ref{sec:theory-abstract-MMD}. To avoid non-essential technicalities we shall assume that all dual faces are affine.  

\begin{lemma}\label{lem:GMLS-P1}
The pair $\{T_F, \{\mu_i,\bm{\mu}_{ij},\bm{\mu}_{\Gamma_i}\}\}$ is $P_1$-reproducing.
\end{lemma}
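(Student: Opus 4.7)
The plan is to exploit the fact that the GMLS approximation built in Section \ref{sec:gmlsquad} is by construction $\Phi$-reproducing for $\Phi=(P_1)^d$, and then to invoke the classical divergence theorem on each physical dual cell $\omega_i$.

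First, I would observe that for every $\bm{p}\in (P_1)^d$ with point sample $\bm{p}^h$ the general GMLS identity $\widetilde{\tau}(\phi)=\tau(\phi)$ on $\Phi$ specializes, for the targets \eqref{eq:target} and the face moments $\bm{\mu}_{\bm f}=\bm{\tau}_{\bm f}(\bm{\phi})$, to
\[
\bm{c}_{\bm f}(\bm{p}^h)\cdot \bm{\mu}_{\bm f} \;=\; \widetilde{\tau}_{\bm f}(\bm{p}^h) \;=\; \tau_{\bm f}(\bm p) \;=\; \int_{\bm f}\bm p\cdot\bm n_{\bm f}\, dS,
\qquad \bm f\in F\cup\{\Gamma_j\,|\,\bm x_j\in\gamma^\prime\}.
\]
Substituting this identity for every term on the right-hand side of \eqref{eq:div-dis-GMLS} (the interior face terms via $\bm f_{ij}$, the Neumann-type boundary terms via $\chi_i(\gamma^\prime)$), and leaving the already-exact integral over $\chi_i(\gamma)$ untouched, collapses the contraction into a single surface integral over all the faces making up the boundary of the dual cell.

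Next, using \eqref{eq:cell_bdry} to recognize that the union $\big(\bigcup_{j\in V_i}\bm f_{ij}\big)\cup \chi_i(\gamma^\prime)\cup \chi_i(\gamma) = \partial\omega_i$, and the fact that outward normals on faces shared with neighbors are consistent, the right-hand side of \eqref{eq:div-dis-GMLS} evaluated at $\bm{p}^h$ becomes
\[
(\DIVGMLS \bm{p}^h)_i \;=\; \frac{1}{\mu_i}\int_{\partial\omega_i}\bm p\cdot\bm n\, dS.
\]
At this point I apply the classical divergence theorem on the physical cell $\omega_i$, which is licit because a conforming background mesh is assumed in Section \ref{sec:gmlsquad}, to rewrite the surface integral as $\int_{\omega_i}\nabla\cdot\bm p\, dV$. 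Since $\bm p\in (P_1)^d$ its divergence $\nabla\cdot\bm p$ is a constant, so $\int_{\omega_i}\nabla\cdot\bm p\, dV = \mu_i\,\nabla\cdot\bm p(\bm x_i)$ and the factor $1/\mu_i$ cancels to yield \eqref{eq:mimetic_div_P1}.

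The only subtle point is to make sure the GMLS polynomial reproduction really does hold for each target $\tau_{\bm f}$, which rests on the $\Phi$-unisolvency of the sampling set within the kernel support. Provided $\epsilon_\kappa$ is chosen large enough to guarantee unisolvency (as stated in the bullet list of Section \ref{sec:gmlsquad}), the argument above goes through uniformly in $\bm f$, establishing the lemma.
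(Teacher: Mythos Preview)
The proposal is correct and follows essentially the same argument as the paper: use exactness of the GMLS approximation on $(P_1)^d$ to replace each contraction $\bm{c}_{\bm f}(\bm{p}^h)\cdot\bm{\mu}_{\bm f}$ by the exact face integral $\int_{\bm f}\bm p\cdot\bm n_{\bm f}\,dS$, assemble these into $\int_{\partial\omega_i}\bm p\cdot\bm n\,dS$, apply the divergence theorem on the physical dual cell, and use that $\nabla\cdot\bm p$ is constant. Your added remark about $\epsilon_\kappa$ and unisolvency is a reasonable caveat but not needed for the lemma itself, since that hypothesis is already built into the setup of Section~\ref{sec:gmlsquad}.
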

\begin{proof}
Let $\bm{p}\in (P_1)^d$ and $\bm{p}^h\in U^h_\gamma$ denote its point sample on the cloud. By construction, the GMLS approximation \eqref{eq:GMLSface} is exact for linear polynomials and so,
$$
\bm{c}_{ij}(\bm{p}^h) \cdot \bm{\mu}_{ij} 
=
\int_{\bm{f}_{ij}} \bm{p}\cdot\bm{n}_{ij} \,dS 
\quad\mbox{and}\quad
\bm{c}_{j}(\bm{p}^h)\cdot \bm{\mu}_{\Gamma_j}
=
\int_{\Gamma_{j}} \bm{p}\cdot\bm{n}_{j} \,dS \,.
$$
Inserting these identities in \eqref{eq:div-dis-GMLS}, using  the divergence theorem and the fact that $\nabla\cdot\bm{p}$ is constant, yields 
$$
(\DIVGMLS \bm{p}^h)_i 
=
\frac1\mu_i\int_{\partial\omega_i} \bm{p}\cdot\bm{n}_i dS
=
\frac1\mu_i\int_{\omega_i} \nabla\cdot\bm{p} \,dx
=
\nabla\cdot\bm{p} \,\big |_{\bm{x}_i}\,.
$$
\end{proof}

\begin{lemma}\label{lem:GMLS-sym}
The face moments $\{\bm{\mu}_{ij}\}$ and the operator $T_F$ satisfy assumptions \textbf{T.2} and \textbf{T.3}, respectively.
\end{lemma}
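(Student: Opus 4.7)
The plan is to track how each ingredient of the GMLS construction depends on the orientation of the virtual face, and to observe that the face moments pick up a sign change from the normal while the transfer operator depends only on the geometric face, not on its orientation.

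For assumption \textbf{T.2}, I would begin by recalling that the virtual face $\bm{f}_{ij}$ and $\bm{f}_{ji}$ are the same geometric object: the face shared by the dual cells $\omega_i$ and $\omega_j$. What changes with the ordering of the index pair is only the direction of the outward unit normal, with $\bm{n}_{ji}=-\bm{n}_{ij}$. Since the face moment is defined componentwise by $(\bm{\mu}_{ij})_s = \tau_{ij}(\bm{\phi}_s) = \int_{\bm{f}_{ij}} \bm{\phi}_s\cdot\bm{n}_{ij}\,dS$ with $\bm{\phi}_s$ from the basis \eqref{eq:P1V_basis}, swapping $i$ and $j$ only flips the sign of $\bm{n}_{ij}$ inside the surface integral, which yields $\bm{\mu}_{ji} = -\bm{\mu}_{ij}$ componentwise. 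This is the entirety of \textbf{T.2}.

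For assumption \textbf{T.3}, the task is to show that the GMLS coefficient vector $\bm{c}_{\bm{f}}(\bm{u}^h)$, and hence $\bm{t}_{\bm{f}}(\bm{u}^h)$, does not depend on the orientation of $\bm{f}$. I would inspect the defining least-squares problem \eqref{eq:gmls-P1}. The basis sample matrix $B$ is assembled from $\lambda_k(\bm{\phi}_j)$ with $\lambda_k=\delta_{\bm{x}_k}$, so $B$ is manifestly independent of $\bm{f}$. The only $\bm{f}$-dependent piece is the diagonal weight matrix $W(\tau_{\bm{f}})$, whose entries are $w(\tau_{\bm{f}};\lambda_k)=\kappa(|\bm{x}_{\bm{f}}-\bm{x}_k|)$ with $\bm{x}_{\bm{f}}$ the centroid of $\bm{f}$. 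Since $\bm{f}_{ij}$ and $\bm{f}_{ji}$ share this centroid, $W(\tau_{ij})=W(\tau_{ji})$, and the two weighted least-squares problems coincide, so $\bm{c}_{ij}(\bm{u}^h)=\bm{c}_{ji}(\bm{u}^h)$. By the definition of $T_F$ in \eqref{eq:point-mesh-GMLS}, this gives $\bm{t}_{ij}(\bm{u}^h)=\bm{t}_{ji}(\bm{u}^h)$, which is \textbf{T.3}.

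Neither step presents a real obstacle; the only thing to be careful about is the bookkeeping distinction between the oriented face (which carries the sign of the normal) and the underlying geometric face (which provides the centroid used to weight the GMLS fit), and to record that the basis sample matrix $B$ is completely oblivious to the face.
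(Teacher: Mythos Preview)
Your proposal is correct and follows essentially the same approach as the paper: for \textbf{T.2} you use $\bm{n}_{ji}=-\bm{n}_{ij}$ in the defining integral, and for \textbf{T.3} you observe that the weight matrix $W(\tau_{\bm{f}})$ depends only on the centroid $\bm{x}_{\bm{f}}$, which is shared by $\bm{f}_{ij}$ and $\bm{f}_{ji}$. Your added remark that the basis sample matrix $B$ is independent of $\bm{f}$ is a minor elaboration the paper leaves implicit, but the argument is otherwise identical.
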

\begin{proof}
To show that \textbf{T.3} holds note that the kernel $w(\tau_{ij};\lambda_i)$ is anchored at the centroid $\bm{x}_{\bm{f}_{ij}}$ of $\bm{f}_{ij}$. As a result, \eqref{gmlscoeffs} implies that $\bm{c}_{ij}(\bm{u}^h)=\bm{c}_{ji}(\bm{u}^h)$, i.e., $T_F$ is symmetric.
For \textbf{T.2} we use that $\bm{n}_{ij} = -\bm{n}_{ji}$, and so, 
$$
(\bm{\mu}_{ij})_s  
= \int_{\bm{f}_{ij}}\! {\bm{\phi}_s}\cdot \bm{n}_{ij}\, dS 
= -\int_{\bm{f}_{ji}}\! {\bm{\phi}_s}\cdot \bm{n}_{ji}\, dS
=-(\bm{\mu}_{ji})_s;  \quad s=1,\ldots,n_1 \,. 
\vspace{-4ex}
$$
\end{proof}

\begin{lemma}\label{lem:GMLS-Lip}
The pair $\{T_F, \{\bm{\mu}_{ij},\bm{\mu}_{\Gamma_i}\}\}$ is locally Lipschitz continuous on $X$. 
\end{lemma}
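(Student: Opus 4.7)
The plan is to use the linearity of the GMLS coefficient map to reduce the claim to two separate scaling estimates, one on $|\bm{c}_{\bm f}(\cdot)|$ and one on $|\bm{\mu}_{\bm f}|$. Writing $\bm{w}^h := \bm{u}^h - \bm{v}^h$, linearity of \eqref{gmlscoeffs} gives $\bm{c}_{\bm f}(\bm u^h) - \bm{c}_{\bm f}(\bm v^h) = \bm{c}_{\bm f}(\bm w^h)$, so by Cauchy--Schwarz it suffices to show, with an appropriately chosen ball $B^h_{\bm f}$, that $|\bm{c}_{\bm f}(\bm w^h)| \le C\,\|\bm w^h\|_{\ell^\infty, X_{B^h_{\bm f}}}$ and $|\bm{\mu}_{\bm f}| \le C\,h_X^{d-1}$, both with constants independent of $h_X$ and $\bm{f}$.

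For the localization I would take $B^h_{\bm f}$ to be the support of $w(\tau_{\bm f};\cdot)$, i.e., the ball of radius $\epsilon_\kappa = O(h_X)$ centered at the face barycenter $\bm{x}_{\bm f}$. Because the diagonal matrix $W(\tau_{\bm f})$ vanishes outside this ball, formula \eqref{gmlscoeffs} depends only on the entries of $\bm w^h$ located in $X_{B^h_{\bm f}}$, and by quasi-uniformity \eqref{eq:quasi} the cardinality of this set is bounded independently of $h_X$. To obtain the two scaling bounds I would pass to local coordinates $\bm{y} = (\bm{x}-\bm{x}_{\bm f})/h_X$ and the correspondingly rescaled $P_1$ basis. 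In these coordinates $B$ and $W$ have entries of size $O(1)$, the matrix $B^\intercal W B$ is uniformly positive definite, and the product $(B^\intercal W B)^{-1} B^\intercal W$ acts as a uniformly bounded linear map on $\mathbb{R}^{|X_{B^h_{\bm f}}|}$, which yields the desired bound on $|\bm{c}_{\bm f}(\bm w^h)|$. In the same scaled picture each component of $\bm{\mu}_{\bm f}$ is the integral over a surface of measure $O(h_X^{d-1})$ of an $O(1)$ integrand, so $|\bm{\mu}_{\bm f}| = O(h_X^{d-1})$. Combining the two estimates via Cauchy--Schwarz produces \eqref{eq:locLip}.

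The one nontrivial ingredient is the uniform lower bound on the smallest eigenvalue of $B^\intercal W B$ in the scaled coordinates, which is equivalent to requiring that the local sampling set $X_{B^h_{\bm f}}$ remain robustly $\Phi$-unisolvent as $h_X \to 0$. This is a standard well-posedness statement in GMLS theory: it follows from quasi-uniformity \eqref{eq:quasi}, which guarantees both a uniform \emph{lower} density of samples via $q_X \ge h_X/c_{qu}$ and a uniform \emph{upper} density, together with the fact that $\epsilon_\kappa$ was chosen large enough to ensure $P_1$-unisolvency of $\Lambda$ restricted to $X_{B^h_{\bm f}}$; alternatively it can simply be invoked from \cite[Chapter 4]{WendlandBook}. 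Once that bound is in hand, the remainder of the argument is routine polynomial algebra and the lemma follows.
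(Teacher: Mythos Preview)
Your argument is correct, but it follows a different route from the paper's proof. You decouple the contraction via Cauchy--Schwarz into separate bounds $|\bm{c}_{\bm f}(\bm w^h)|\le C\|\bm w^h\|_{\ell^\infty}$ and $|\bm{\mu}_{\bm f}|\le Ch_X^{d-1}$, obtained through a local rescaling $\bm y=(\bm x-\bm x_{\bm f})/h_X$ and a corresponding change of basis in $\Phi$. The paper instead exploits the affine-face assumption directly: since $\bm n$ is constant on $\bm f$ and $\bm{\phi}_s\cdot\bm n$ is linear, the midpoint rule is exact and
\[
\bm{c}_{\bm f}(\bm u^h)\cdot\bm{\mu}_{\bm f}
=\Big[\sum_s \bm{c}^s_{\bm f}(\bm u^h)\,\bm{\phi}_s(\bm x_{\bm f})\Big]\cdot\bm n\,|\bm f|
=\widetilde{\bm u}(\bm x_{\bm f})\cdot\bm n\,|\bm f|,
\]
where $\widetilde{\bm u}$ is the MLS approximant. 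The Lipschitz bound then follows immediately from the classical uniform stability of MLS (Lebesgue constant bounded independently of $h_X$) together with $|\bm f|=O(h_X^{d-1})$.

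Both proofs ultimately rest on the same well-posedness fact you flag---uniform invertibility of $B^\intercal W B$ in scaled coordinates, equivalently uniform boundedness of the MLS Lebesgue function---so neither avoids that ingredient. The paper's route is shorter and sidesteps any basis-dependence issues by never splitting the contraction; your route is more modular and would extend more readily to settings where the midpoint-rule identity fails (e.g., non-affine faces or higher-order reproduction spaces), at the cost of having to track the basis rescaling carefully so that the two factor bounds are taken in the \emph{same} basis before recombining.
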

\begin{proof}
Let $\bm{u},\bm{v} \in (C^2(\Omega))^d$ with point samples $\bm{u}^h,\bm{v}^h\in U^h_\gamma$. Given   $\bm{f}\in F \cup \{\Gamma_j \,|\, \bm{x}_j\in\gamma^\prime\}$ we have that
\begin{equation}\label{eq:lip1}
\bm{c}_{\bm{f}}(\bm{u}^h)\cdot \bm{\mu}_{\bm{f}}
=
\bm{c}_{\bm{f}}(\bm{u}^h)\cdot \tau_{\bm{f}}(\bm{\phi})
=
\sum_{s=1}^{n_1} \bm{c}^s_{\bm{f}} (\bm{u}^h) \int_{\bm{f}} \bm{\phi}_{s}(\bm{x})\cdot\bm{n} dS .
\end{equation}
By assumption, all dual faces are affine and so their unit normals are constant.  As a result, $\bm{\phi}_{s}(\bm{x})\cdot\bm{n}$ is linear on $\bm{f}$ and
$$
\int_{\bm{f}} \bm{\phi}_{s}(\bm{x})\cdot\bm{n} dS =  \bm{\phi}_{s}(\bm{x}_{\bm{f}})\cdot\bm{n} |\bm{f}| \,.
$$
It follows that 
$$
\bm{c}_{\bm{f}}(\bm{u}^h)\cdot \bm{\mu}_{\bm{f}}
=
\left[ \sum_{s=1}^{n_1} \bm{c}^s_{\bm{f}} (\bm{u}^h) 
 \bm{\phi}_{s}(\bm{x}_{\bm{f}})\right] \cdot\bm{n} |\bm{f}|
$$ 
The expression in the square brackets is the Moving Least Squares (MLS) approximant $\widetilde{\bm{u}}(\bm{x}_{\bm{f}})$ of $\bm{u}(\bm{x}_{\bm{f}})$; see \cite[Chapter 4]{WendlandBook}. Using the fact that $\widetilde{\bm{u}}(\bm{x}_{\bm{f}})$ is uniformly bounded by the data we obtain
$$
\left|
\bm{c}_{\bm{f}}(\bm{u}^h)\cdot \bm{\mu}_{\bm{f}} - \bm{c}_{\bm{f}}(\bm{v}^h)\cdot \bm{\mu}_{\bm{f}}
\right|
=
\left| 
\widetilde{\bm{u}}(\bm{x}_{\bm{f}})\cdot\bm{n} - \widetilde{\bm{v}}(\bm{x}_{\bm{f}})\cdot\bm{n}
\right| \,|\bm{f}|
\le
C\|\bm{u}^h - \bm{v}^h \|_{\ell^\infty, X_{B^h_{\bm{f}}}}
|\bm{f}| \,.
$$
The proof follows by noting that for physical faces $|\bm{f}| = O(h^{d-1}_X)$.
\end{proof}

Lemmata \eqref{lem:GMLS-P1}--\eqref{lem:GMLS-Lip} verify all requirements of the abstract theory in \S\ref{sec:theory-abstract-MMD}. Thus, we have the following result.

\begin{theorem}\label{thm:MMD-GMLS}
The operator \eqref{eq:div-dis-GMLS}  satisfies \eqref{eq:mimetic_property} (local conservation), \eqref{eq:mimetic_property_global} (global conservation), and is first-order accurate.
\end{theorem}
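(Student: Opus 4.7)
The plan is to derive Theorem \ref{thm:MMD-GMLS} as an immediate corollary of the abstract framework in Section \ref{sec:theory-abstract-MMD}, using the three Lemmata \ref{lem:GMLS-P1}--\ref{lem:GMLS-Lip} that have just been proved to dispense with each hypothesis of that framework. In other words, there is essentially no new work to do: the purpose of the proof is to assemble the pieces in the right order and explicitly certify that \eqref{eq:div-dis-GMLS} fits into the hypotheses of Theorems \ref{thm:mimetic_div_abs_cons} and \ref{thm:error}.

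First I would verify assumption \textbf{T.1} separately. This is where the ``background mesh'' hypothesis of Section \ref{sec:gmlsquad} does real work: the weights $\{\mu_i\}$ are the measures of the physical dual cells on a conforming mesh covering $\Omega$, hence they are strictly positive, scale like $h_X^d$ under the quasi-uniformity assumption \eqref{eq:quasi}, and partition $\Omega$ so that $\sum_i \mu_i = \mu(\Omega)$. Next I would invoke Lemma \ref{lem:GMLS-sym} to get the antisymmetry of $\{\bm{\mu}_{ij}\}$ (\textbf{T.2}) and the symmetry of the GMLS transfer operator $T_F$ (\textbf{T.3}). At this point Theorem \ref{thm:mimetic_div_abs_cons} applies verbatim and delivers both the local conservation identity \eqref{eq:mimetic_property} and the global identity \eqref{eq:mimetic_property_global} for $\DIVGMLS$.

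For first-order accuracy I would combine the already-verified assumption \textbf{T.1} with Lemma \ref{lem:GMLS-P1}, which says that $\{T_F;\{\mu_i,\bm{\mu}_{ij},\bm{\mu}_{\Gamma_i}\}\}$ is a $P_1$-reproducing pair, and Lemma \ref{lem:GMLS-Lip}, which says that $\{T_F,\{\bm{\mu}_{ij},\bm{\mu}_{\Gamma_i}\}\}$ is locally Lipschitz continuous on $X$ in the sense of Definition \ref{def:locLip}. These are exactly the three hypotheses of Theorem \ref{thm:error}, so applying that theorem directly yields the error bound \eqref{eq:MMD_abs_error} with a constant independent of the point cloud, which is the desired first-order accuracy statement.

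Because the three lemmata have absorbed all the nontrivial GMLS-specific analysis (polynomial reproduction, kernel centering at face centroids, and MLS stability bounds, respectively), there is no real obstacle left to overcome in the proof of the theorem itself. If anything, the only thing to be slightly careful about is presentation: making clear that \textbf{T.1} is not one of the lemmata and must be justified by the existence of the physical dual mesh, so that the reader sees why this theorem is particular to the hybrid MMD instance of Section \ref{sec:gmlsquad} and does not automatically carry over to the purely meshfree instance of Section \ref{sec:divtheorem}.
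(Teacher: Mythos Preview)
Your proposal is correct and matches the paper's approach exactly: the paper states that Lemmata \ref{lem:GMLS-P1}--\ref{lem:GMLS-Lip} verify all requirements of the abstract theory in \S\ref{sec:theory-abstract-MMD} (with \textbf{T.1} handled separately at the start of \S\ref{sec:MMD-GMLS-analysis} via the physical dual-cell volumes), and then asserts Theorem \ref{thm:MMD-GMLS} without a separate proof block. Your remark about \textbf{T.1} being the one hypothesis not covered by the lemmata, and hence the place where the background-mesh assumption is essential, is precisely the point the paper makes.
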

%

\subsection{MMD without a background mesh}\label{sec:divtheorem}
%
In this section we drop the assumption of a background mesh and only require that the virtual vertices $V$ are tied to the point cloud $X$, see \S\ref{sec:abstract}. 
To implement \eqref{eq:mimetic_div_abs} under these conditions one has to construct the necessary topological, field, and metric data without referencing physical cells and faces, except for the boundary segments $\Gamma_i\in\gamma^\prime$ where field and face moments can be defined as in \S\ref{sec:gmlsquad}.
%

\subsubsection{Topological data}
The virtual boundary in \eqref{eq:virtual_bdry} is completely determined by the list of edges connected to the primal vertex $\bm{v}_i$. Any graph  $G(V,E)$ built on the point cloud can provide this information.  In particular, here we shall use the  $\varepsilon_g$-ball graph $G_{\varepsilon_g} (V,E)$ of  $X$ with vertices $V:={X}$ and edges
$$
E:=\left\{ \bm{e}_{ij} = (\bm{x}_i,\bm{x}_j) \in V\times V \,\big|\,  |\bm{x}_i-\bm{x}_j | < \varepsilon_g\right\} \,,
$$
where $\varepsilon_g$ is a given positive real number. 
This graph  may be trivially constructed with $O(p_{\Omega})$ computational complexity using standard binning algorithms. 
We view $G_{\varepsilon_g} (V,E)$ as a surrogate for a primal mesh and associate it with the virtual primal grid $\tau_h(V,E)$. The latter induces a virtual dual grid $\tau^\prime_h(C,F)$ that does not possess any physical entities, except for the boundary segments $\Gamma_i$.

\begin{remark}\label{rem:primal_dual}
We use a (virtual) primal-dual setting to define the abstract MMD operator precisely because any graph on $X$ induces a boundary operator on $\tau^\prime_h(C,F)$.
Building such a graph only requires defining its edges, which can be done efficiently on any point cloud. In contrast, if one were to adopt a single virtual mesh, one would have to construct suitable surrogates for its virtual vertices, edges, faces and cells in order to obtain a correct notion of a virtual boundary. This is a substantially more difficult task. 
\end{remark}

\subsubsection{Field data}\label{sec:field-data}
In Section \ref{sec:gmlsquad} we used the GMLS coefficients  $\bm{c}_{ij}(\bm{u}^h)\in\mathbb{R}^{n_{1}}$ to define the data transfer operator $T_{F} : U^h_\gamma \rightarrow \widetilde{U}^h_\gamma$. 

The only physical mesh entities needed to set up the least-squares problem  \eqref{eq:gmls-P1} for these coefficients were the dual face centroids $\bm{x}_{\bm{f}_{ij}}$, involved in the kernel  $\omega(\tau_{ij},\lambda_i)$.
In the present context the dual faces and their centroids are purely virtual. These virtual centroids can be tied to any reasonable physical location. Here we choose the midpoint  $\bm{x}_{ij} = (\bm{x}_i + \bm{x}_j)/{2}$ of the primal edge corresponding to $\bm{f}_{ij}$ as a physical location of the face's virtual centroid, i.e., we set  $\bm{x}_{\bm{f}_{ij}}:=\bm{x}_{ij} $.

Efficient computation and storage of the field moments requires some care though. The number of virtual faces in $\partial\omega_i$ equals the valence of the primal vertex $\bm{x}_i$. Since in the absence of a background mesh there are no restrictions on the number of edges connected to this vertex, computing and storing the GMLS coefficients $\bm{c}_{ij}(\bm{u}^h)$ for every virtual dual face may become ineffective.  
To improve efficiency we compute and store these coefficients on the point cloud. This can be accomplished by changing the kernel from  $w(\tau_{\bm{f}};\lambda_i) = \kappa(|\bm{x}_{\bm{f}} - \bm{x}_i |)$ to $\omega(\lambda_j,\lambda_i)=\kappa(|\bm{x}_{j} - \bm{x}_i |)$, which replaces the matrix $W(\tau_{\bm{f}})$ in \eqref{eq:gmls-P1} by a new matrix $W(\bm{x}_j)$ with element $W_{ii}(\bm{x}_j) = \kappa(|\bm{x}_{j} - \bm{x}_i |)$. Solution of the modified problem \eqref{eq:gmls-P1} yields the  ``nodal'' GMLS coefficients $\bm{c}_{i}(\bm{u}^h)$, which are then "blended" into the field moments according to 
\begin{equation}\label{thetaeq}
  \bm{c}_{ij}(\bm{u}^h) = \theta_{ij} \bm{c}_{i}(\bm{u}^h) + (1 - \theta_{ij}) \bm{c}_{j}(\bm{u}^h);
  \quad \theta_{ij} = \theta_{ji} \in [0,1].
\end{equation}
This blending preserves polynomial reproduction property and the blended coefficients \eqref{thetaeq}  satisfy \textbf{T.3}.

\subsubsection{Metric data}\label{sec:metric}
In Section \ref{sec:gmlsquad} we used the GMLS target vector $\bm{\tau}_{\bm{f}}(\bm{\phi})\in \mathbb{R}^{n_1}$ to define the virtual face moments $\bm{\mu}_{\bm{f}}$. In contrast to the GMLS coefficient vector $\bm{c}_{ij}(\bm{u}^h)$, which only requires the centroids  $\bm{x}_{\bm{f}_{ij}}:=\bm{x}_{ij} $, computation of $\bm{\tau}_{ij}(\bm{\phi})$ is impossible without physical dual faces. 
Likewise, in the absence of physical dual cells we also need a method for assigning a volume $\mu_i$ to every virtual dual cell. 

In this section we formulate a scalable and computationally efficient algebraic procedure for computing virtual cell volumes  and virtual face moments. 
The procedure starts with construction of virtual volumes $\{\mu_i\}$ satisfying \textbf{T.1}. We then seek the components of the face moments $\bm{\mu}_{ij}$ as gradients of scalar potentials. Inserting this ansatz into the $P_1$-reproduction condition \eqref{eq:mimetic_div_P1} results in $n_{1}$ independent graph Laplacian problems for the face moments that can be solved in an efficient and scalable way by using algebraic multigrid preconditioners.

\paragraph{Definition of virtual volumes}
To construct the virtual volumes we consider the quadratic program
\begin{equation}\label{eq:volumes}
\min_{\bm{\mu}\in \mathbb{R}^{p_\Omega}} \sum_{i\in X}\mu_i^2\rho_i 
\quad\mbox{subject to}\quad \sum_{i\in X}\mu_i = |\Omega|\,,
\end{equation}
where $\rho_i>0$ are ``densities'' assigned to each virtual dual cell. In this work we use two different density distributions. 
The first one is  $\rho^{\mbox{\tiny\sf U}}_i = 1$ in which case solution of \eqref{eq:volumes} yields the \emph{uniform} virtual volumes
\begin{equation}\label{eqn:vol1}
  \mu_i^{\mbox{\tiny\sf U}} = \frac{|\Omega|}{p_\Omega},\quad i=1,\ldots, p_\Omega.
\end{equation}
 However, the uniform volumes \eqref{eqn:vol1} may not be the best choice for multiresolution problems that call for non-quasiuniform point clouds. For such point clouds one can use the non-uniform density distribution
$$
\rho_i^{\mbox{\tiny\sf MR}}  = \left( \underset{j\in X}{\sum} \kappa(|\bm{x}_i - \bm{x}_j|) \right)^{-1} \,,
$$
where $\kappa(\cdot)$ is the kernel used in the GMLS problem. In this case solution of \eqref{eq:volumes} results in 
\begin{equation}\label{eqn:vol2}
  \mu_i^{\mbox{\tiny\sf MR}}  = \rho_i^{\mbox{\tiny\sf MR}} \left(\frac{|\Omega|}{\underset{j\in X}{\sum} \rho_j^{\mbox{\tiny\sf MR}}}\right) \,.
\end{equation}
Both  \eqref{eqn:vol1} and \eqref{eqn:vol2} satisfy \textbf{T.1} and numerical results in \S\ref{sec:volume-sens} reveal that either one provides first-order convergence on quasi-uniform point clouds. This suggests that the actual volume definition is not critical for the accuracy of the MMD operator on such clouds.
Nonetheless, unless otherwise noted,  we use \eqref{eqn:vol2} because it is simple to implement, while still providing a sense of adaptivity for multiresolution problems.

\paragraph{Definition of virtual face moments}
For clarity we discuss construction of the face moments assuming that $\gamma=\Gamma$ and $\gamma^\prime = \emptyset$. 
For the same reason it will be convenient to relabel the polynomial basis functions in \eqref{eq:P1V_basis} and their point samples on the cloud using double indices as
$$
\bm{\phi}_{k,r}(\bm{x}) := \mathbf{e}_k\, \phi_r(\bm{x});\quad
k=1,\ldots,d;\quad r=1,\ldots, d+1\,,
$$
and $\bm{\phi}^h_{k,r} \in U^h_\Gamma$, respectively. We also relabel the components of the field moments and the virtual face moments to match the indexing of the basis functions, i.e., we write $(\bm{c}_{ij})_{k,r}$ and $(\bm{\mu}_{ij})_{k,r}$  for the components of $\bm{c}_{ij}$ and $\bm{\mu}_{ij}$, respectively, corresponding to $\bm{\phi}_{k,r}$.

Consistency of the MMD operator requires $\{ T_F,\{\mu_i,\bm{\mu}_{ij} \}\}$ to be a $P_1$-reproducing pair. A sufficient condition for this property is that \eqref{eq:mimetic_div_P1} holds for all basis functions, i.e., we require that for all $\bm{x}_i\in X_0$
\begin{equation}\label{eq:P1-iff}
\sum_{\bm{f}_{ij}\in\partial\omega_i} \bm{c}_{ij}(\bm{\phi}^h_{k,r}) \cdot \bm{\mu}_{ij} 
= \mu_i\nabla\cdot\bm{\phi}_{k,r} \,\big |_{\bm{x}_i} - \int_{\chi_i(\gamma)}\bm{\phi}_{k,r}\cdot\bm{n} dS;
\quad k=1,\ldots,d;\  r=1,\ldots,d+1 \,.
\end{equation}
Note that 
$\nabla\cdot\bm{\phi}_{k,r} = \partial_{k}\phi_r$ and
$\bm{\phi}_{k,r}\cdot\bm{n} = \bm{n}_k \phi_r$, while the $P_1$-reproduction property of GMLS  implies that
$$
\bm{c}_{ij}(\bm{\phi}^h_{k,r}) \cdot \bm{\mu}_{ij}  
=
(\bm{\mu}_{ij})_{k,r} \,.
$$
Therefore, \eqref{eq:P1-iff} is equivalent to the statement that for all $\bm{x}_i\in X_0$ there holds
\begin{equation}\label{eq:mimetic_div_P1_moment}
\sum_{\bm{f}_{ij}\in\partial\omega_i}  (\bm{\mu}_{ij})_{k,r} 
= \mu_i \partial_{k}\phi_r \,\big |_{\bm{x}_i} - \int_{\chi_i(\gamma)}\bm{n}_k \phi_r \,dS;
\quad \quad k=1,\ldots,d;\  r=1,\ldots,d+1\,.
\end{equation}
The sum over $\bm{f}_{ij}$ in \eqref{eq:mimetic_div_P1_moment} is the graph divergence of a vector containing the $(k,r)$th components of $\bm{\mu}_{ij}$. This prompts us to seek each component as a \emph{scaled} graph gradient of a  virtual area potential  $\bm{\psi}^h_{k,r} \in S^h$, i.e.,
\begin{equation}\label{eq:face-moment-ansatz}
(\bm{\mu}_{ij})_{k,r} := 
\left((\bm{\psi}^{h}_{k,r})_i - (\bm{\psi}^{h}_{k,r})_j\right)\cdot \phi_r(\bm{x}_{ij}) \,,
\end{equation}
where $\bm{x}_{ij}$ is the physical centroid of the virtual face $\bm{f}_{ij}$. Inserting the ansatz \eqref{eq:face-moment-ansatz} into \eqref{eq:mimetic_div_P1_moment} yields the following set of linear equations for the scalar potentials: for $r=1,\ldots,d+1$ solve
\begin{equation}\label{eq:face-moment-graph-lap}
\sum_{\bm{f}_{ij}\in\partial\omega_i} 
\left((\bm{\psi}^{h}_{k,r})_i - (\bm{\psi}^{h}_{k,r})_j\right)\cdot \phi_r(\bm{x}_{ij})
=
\mu_i \partial_{k}\phi_r \,\big |_{\bm{x}_i} - \int_{\chi_i(\gamma)}\bm{n}_k \phi_r \,dS
\quad \forall\bm{x}_i\in X_0
\end{equation}
for $k=1,\ldots,d$.
Equation \eqref{eq:face-moment-graph-lap} is a weighted graph Laplacian problem. Thus, to determine the virtual face moments we need to solve $d+1$ graph Laplacians with different weights, each with $d$ different right-hand-sides. We write these linear systems as
$$
\mathcal{L}_r \bm{\psi}_{k,r} = \bm{b}_{k,r}.
$$
The standard $n \times n$ graph Laplacian is an M-matrix, and therefore, it may be solved with $O(n)$ complexity using standard algebraic multigrid. 
For the weighted graph Laplacian $\mathcal{L}_r$ we must ensure that the weights $\phi_r(\bm{x}_{ij})$ preserve the M-matrix structure. With the assumption that the coordinate system is such that all points in the cloud have positive coordinates,  this can be easily accomplished by, e.g., choosing the scalar basis set as $\{\phi_r\}_{r=1}^{d} = \{ 1, x_1,\ldots, x_d\}$. 

The operator $\mathcal{L}_r$ also has a one-dimensional null space comprising the constant vector $\mathbf{1}$, i.e., $\mathcal{L}_{r} \mathbf{1} = 0$. As a result, the $d$ right-hand sides $\bm{b}_{k,r}$,  for every one of the $d+1$ graph Laplacian equations in \eqref{eq:face-moment-graph-lap} are subject to the compatibility condition $\mathbf{1}^\intercal \bm{b}_{k,r} = 0$ for $k=1,\ldots,d$. 
It is easy to see that the volume assumption in \textbf{T.1} is sufficient for this compatibility condition to hold. Indeed, from the fact that $\partial_{k}\phi_r = const$ and $\sum_i \mu_i = |\Omega|$ it follows that  for all $k=1,\ldots,d$ there holds
$$
\mathbf{1}^\intercal \bm{b}_{k,r}
=
\sum_{\bm{x}_i\in X_0} 
\left(\mu_i \partial_{k}\phi_r \,\big |_{\bm{x}_i} - \int_{\chi_i(\gamma)}\bm{n}_k \phi_r \,dS\right)
= 
|\Omega| \partial_{k}\phi_r - \int_{\partial \Omega} \bm{n}_k \phi_r \,dS 
=
|\Omega| \partial_{k}\phi_r - \int_{\Omega} \partial_{k}\phi_r \,dx 
= 0.
$$

\subsubsection{MMD without a background mesh: setup and properties} 
Assuming that $\gamma=\Gamma$ we have the following instance of the abstract MMD operator \eqref{eq:mimetic_div_abs}
\begin{equation}\label{eqn:SBPdiv}
(\DIVGMLSP \bm{u}^h)_i  
= 
\frac{1}{\mu_i} \left[
\sum_{\bm{f}_{ij}\in\partial\omega_i}(\bm{c}_{ij}(\bm{u}^h)) \cdot \bm{\mu}_{ij} +
\int_{\chi_i(\gamma)}\bm{u}\cdot\bm{n} dS
\right] \qquad\forall\omega_i\in C \,,
\end{equation}
where the field moments $\bm{c}_{ij}$ are given by \eqref{thetaeq}, the virtual volumes $\mu_i$ are given by \eqref{eqn:vol1} or \eqref{eqn:vol2}, and the virtual face moments $\bm{\mu}_{ij}$ are defined by solving the graph Laplacian systems \eqref{eq:face-moment-graph-lap} for the scalar potentials $\psi^h_{k,r}$ and then using the ansatz \eqref{eq:face-moment-ansatz}.
Thus, although \eqref{eq:div-dis-GMLS} and \eqref{eqn:SBPdiv}   have the same structure, the metric information in the latter is determined in a purely algebraic way without requiring a physical mesh structure. 
It is, therefore, of some interest to examine the comparative costs of setting up these operators. To make this comparison more equitable we do not count the mesh generation towards the setup cost of \eqref{eq:div-dis-GMLS}, as the purpose of this operator is to improve discretization robustness and accuracy on an already existing mesh. 

The setup of \eqref{eq:div-dis-GMLS} involves computing the GMLS coefficients and the face moments for every dual face. Assuming that the valence of every primal node in the mesh is bounded by a constant $C_{\bm{e}}$ the total number of these faces is\footnote{Recall that the background mesh nodes are assumed to coincide with the point cloud $X$ and so their number equals $p_\Omega$.} $O(p_\Omega)$.
The work to compute each $\bm{c}_{ij}$ is proportional to the average  number $n_\kappa$ of mesh nodes in the support of the kernel function, while the cost to compute every face moment $\bm{\mu}_{ij}$ is proportional to $d+1$. As a result, the setup cost for  \eqref{eq:div-dis-GMLS} is
$O(p_\Omega(n_\kappa+d))$.

The setup cost for \eqref{eqn:SBPdiv} involves computing the nodal GMLS coefficients $\bm{c}_i$ and the scalar potentials $\psi^h_{k,r}$ on the point cloud. The work for every $\bm{c}_{i}$ is again proportional to $n_\kappa$, while the potentials require the solution of $d+1$ linear systems with $d$ different right hand sides per point. Assuming that these systems are solved using algebraic multigrid the cost will scale as $O(p_{\Omega} d^2 )$, where $p_\Omega$ is the number of points in the cloud $X$. Thus, the total setup work for \eqref{eqn:SBPdiv} scales as $O(p_{\Omega}(n_\kappa + d^2 ))$. 

Note that $n_\kappa$ can be assumed fixed as long as the polynomial degree of the GMLS reproducing space $\Phi$ remains the same. Therefore, the setup cost for both \eqref{eq:div-dis-GMLS} and \eqref{eqn:SBPdiv} scales \emph{linearly} with the number of points in the cloud $X$.

\begin{remark}\label{rem:efficient_div}
In the absence of a background mesh the valence of each point in $X$ may grow with the size of the point cloud, leading to an increase of the number of virtual edges in $\partial\omega_i$. 
Using \eqref{thetaeq} to compute the field moments and the scalar potentials $\psi^h_{k,r}$ to compute the virtual face moments allows us to store all field and metric data, required for the evaluation of \eqref{eqn:SBPdiv}, on the point cloud $X$.
 This is essential for ensuring that the setup cost and the storage requirements of \eqref{eqn:SBPdiv}  scale linearly with the size of the point cloud. 
\end{remark}

\begin{remark}\label{rem:AMG}
Although a detailed discussion of a performant implementation of \eqref{eqn:SBPdiv}  is beyond the scope of this work we include in Table \ref{amgscaling} the number of iterations and total wall-clock time  when using preconditioned conjugate gradient method with a classic algebraic multigrid preconditioner  \cite{Bramble_90_MC} to solve  \eqref{eq:face-moment-graph-lap}. 
The information in the table confirms that these standard techniques work well for the weighted graph Laplacian operators in \eqref{eq:face-moment-graph-lap} and achieve linear computational complexity. 
\end{remark}

%
\begin{table}[]\label{amgscaling}
\centering
\begin{tabular}{ccc}
$p_\omega$       & Solution time (sec) & AMG iterations \\\hline
$16^2$  & 0.0015           & 5.750           \\
$32^2$  & 0.0038           & 6.125          \\
$64^2$  & 0.0094           & 5.000              \\
$128^2$ & 0.0368           & 6.500            \\
$256^2$ & 0.1839           & 6.625          \\
$512^2$ & 0.5782           & 7.125         
\end{tabular}
\caption{
Average solution time and average number of the AMG iterations for  the weighted graph Laplacian equations \eqref{eq:face-moment-graph-lap} as function of the point cloud size. Solution time and AMG iterations are averaged over the set of $d(d+1)$ solves. Results indicate optimal $O(p_\Omega)$ performance of the AMG solver. 
}
\end{table}

\paragraph{Properties}
By construction the metric and field data for \eqref{eqn:SBPdiv} satisfies the topological requirements \textbf{T.1}-\textbf{T.3} and $\{T_F\{\mu_i,\bm{\mu}_{ij}\}\}$ is a $P_1$-reproducing pair. This is sufficient for Theorem \ref{thm:mimetic_div_abs_cons} to hold for \eqref{eqn:SBPdiv}, i.e., this MMD instance satisfies \eqref{eq:mimetic_property} (local conservation) and \eqref{eq:mimetic_property_global} (global conservation). On the other hand, to show that \eqref{eqn:SBPdiv} is also first-order accurate requires $\{T_F,\{\bm{\mu}_{ij}\}\}$ to be a locally Lipschitz continuous pair on $X$, i.e., that \eqref{eq:locLip} holds.  The proof of this property is highly non-trivial and will be addressed in a forthcoming paper. 
However, preliminary numerical convergence studies in Section \ref{sec:num} suggest that  \eqref{eqn:SBPdiv} is indeed first-order accurate.

\section{A virtual finite volume scheme for a scalar advection-diffusion equation}\label{sec:apply}
In this section we apply the abstract MMD operator \eqref{eq:mimetic_div_abs} to construct a meshfree mimetic discretization of the model boundary value problem
\begin{equation}\label{eq:ad}
\begin{aligned}
  -\nabla \cdot \bm{\sigma}(u) = f &\quad \mbox{in $\Omega$} \\ 
                                            u = g &\quad \mbox{on $\gamma^\prime$} \\
    \bm{n}\cdot \bm{\sigma}(u) = h &\quad \mbox{on $\gamma$}
  \end{aligned}
\end{equation}
where $u$ is a scalar variable, $f$, $g$ and $h$ are prescribed data, and $\sigma(\cdot)$ is a flux function. To illustrate the application of the MMD operator it suffices to consider a simple advective-diffusive flux given by
\begin{equation}\label{adflux}
  \bm{\sigma}(u) = \varepsilon \nabla u - \mathbf{a} u := \bm{\sigma}_d + \bm{\sigma}_a\,,
\end{equation}
where $\varepsilon>0$ is a picewise $C^0$ diffusion coefficient and $\mathbf{a} \in (C^0(\Omega))^d$ is a given velocity field. 

We highlight the ability of our approach to deliver a truly conservative meshfree scheme without a physical background mesh by implementing the second instance \eqref{eqn:SBPdiv} of the abstract MMD operator. Thus, we consider the discretization infrastructure of Section \ref{sec:divtheorem}, comprising a quasi-uniform point cloud $X$, an $\epsilon_g$-ball graph $G_{\epsilon_g}(V,E)$ serving as a surrogate primal mesh $\tau_h(V,E)$, and the associated virtual dual mesh $\tau^\prime_h(C,F)$. 
The virtual volumes $\mu_i$ are set by \eqref{eqn:vol2}, while the virtual face moments $\bm{\mu}_{ij}$ are defined by \eqref{eq:face-moment-ansatz} with potentials determined by solving the graph Laplacian systems \eqref{eq:face-moment-graph-lap}. The face moments on the Dirichlet boundary segments  are computed using the target functionals in 
\eqref{eq:GMLSface}, i.e.,
$$
(\bm{\mu}_{\Gamma_i})_s =\int_{\Gamma_j}\bm{\phi}_s\cdot\bm{n}\, dS;
\quad \mbox{for $s=1,\ldots, n_{1}$ and  $\Gamma_j\subset \gamma^\prime$ } \,,
$$
where $\{\bm{\phi}_s\}$ is a basis of $\Phi = (P_1)^d$. To discretize \eqref{eq:ad} we represent the unknown field $u(\bm{x})$ by its point samples on $X_0\cup X_{\gamma}$, the source term $f$ by its point samples on $X$ and assume that the boundary data  $g$ and $h$ are given on the Dirichlet and Neumann parts of the boundary, respectively. 
We thus obtain the following meshfree mimetic discretization of \eqref{eq:ad}: given $f^h\in S^h$ find $u^h\in S^h_{\gamma^\prime}$ such that
\begin{equation}\label{eq:ad-MMD}
\mu_i (\DIVGMLSP\, \widetilde{\bm{\sigma}}(u^h))_i = \mu_i f_i\quad\forall \bm{x}_i\in X_0\cup X_{\gamma}\,,
\end{equation}
where $\widetilde{\bm{\sigma}}(u^h)\in \widetilde{U}^h_\gamma$ is a  ``numerical flux''. Note that the Dirichlet boundary condition is enforced in \eqref{eq:ad-MMD} through the definition of the space $S^h_{\gamma^\prime}$, which implies that $u^h_i = g(\bm{x}_i)$ for all $\bm{x}_i\in \gamma^\prime$. In contrast, the Neumann boundary condition is enforced through the definition of the MMD operator \eqref{eqn:SBPdiv} by applying the specified boundary data on all segments $\Gamma_j\subset \gamma$. 
To complete the formulation of  \eqref{eq:ad-MMD} it remains to construct a numerical flux $\widetilde{\bm{\sigma}}^h$ from the field data $u^h\in S^h_\gamma$. Therefore, our application of the MMD operator to \eqref{eq:ad} is similar to the second use case described in Remark \ref{rem:field_data}, and that is typical of finite volume schemes. Thus, we shall refer to \eqref{eq:ad-MMD} as a virtual finite volume discretization of the model problem. We discuss construction of the numerical flux in the next section.

%

\subsection{Numerical flux}\label{sec:flux}
%
We will define the numerical flux as a sum
$$
\widetilde{\bm{\sigma}}(u^h) := \widetilde{\bm{\sigma}}_d(u^h)+\widetilde{\bm{\sigma}}_a(u^h)\,,
$$
where $\widetilde{\bm{\sigma}}_d(u^h)\in \widetilde{U}^h_\gamma$ and $\widetilde{\bm{\sigma}}_a(u^h)\in \widetilde{U}^h_\gamma$ are diffusive and advective flux moments, respectively. To compute these moments we follow the procedure outlined in Section \ref{sec:field-data} and first generate the necessary GMLS coefficients on the point cloud. Then we use  \eqref{thetaeq} to define $\widetilde{\bm{\sigma}}_d(u^h)$ and $\widetilde{\bm{\sigma}}_a(u^h)$ on the virtual faces. In both cases we assume that the diffusion coefficient and the velocity field are represented on the point cloud by their point samples $\varepsilon^h\in S^h$ and $\mathbf{a}^h\in U^h_\gamma$, respectively.

\subsubsection{Advective flux moments}\label{sec:advective}
The first step in the construction of $\widetilde{\bm{\sigma}}_a(u^h)$ involves computing a set of ``nodal'' GMLS coefficients at all points $\bm{x}_i\in X_0\cup X_\gamma$ from a point sample $\bm{\sigma}^h_a(u)\in U^h_\gamma$ of the advective flux. We consider two different types of such  ``nodal'' coefficients. 
The first one, denoted by $\bm{c}^{a}_{i}(u^h)$, solves the weighted least-squares problem \eqref{eq:gmls-P1} with weight matrix $W(\bm{x}_i)$ and a point sample\footnote{Here $\circ$ denotes the Hadamar (pointwise) product of two vectors.} $\bm{\sigma}^h_a(u)= \mathbf{a}^h\circ u^h \in U^h_\gamma$, i.e., 
\begin{equation}\label{eq:gmls-aflux}
\bm{c}_{i}^a({u}^h)
= \argmin_{\bm{b}\in\mathbb{R}^{n_{1}}}\frac12
\left | B\bm{b} - \bm{\sigma}_a^h(u)  \right |^2_{W(\bm{x}_i)} \,.
\end{equation}
The second ``nodal'' GMLS coefficient, denoted by $\maxvec{\bm{c}}^{\,a}_{i}(u^h)$, is an upwind version of $\bm{c}^{a}_{i}(u^h)$ intended for advection-dominated problems. To incorporate upwinding we modify \eqref{eq:gmls-aflux} as follows. 
Given a point $\bm{x}_i\in X_0\cup X_\gamma$ we define the \emph{upwind part} of the point cloud at this point as
$$
\maxvec{X}_i  = \{\bm{x}_k\in X\,|\, \mathbf{a}_i\cdot\left(\bm{x}_k-\bm{x}_i\right) < 0 \} \cup\{\bm{x}_i\}\,.
$$
Thus, $\maxvec{X}_i$ is the subset of $X$ containing the point $\bm{x}_i$ and all cloud points \emph{strictly upwind} of that point. 
Next, we define the \emph{upwind point sample} of the advective flux as 
$$
\maxvec{\bm{\sigma}}^h_a(u) = \{ (\bm{\sigma}^h_a(u))_i \,|\, \bm{x}_i \in \maxvec{X}_i \} \,.
$$
Finally, we redefine the basis sample matrix $B$ and the weight matrix $W(\bm{x}_i)$ in \eqref{eq:gmls-aflux} to include only the rows corresponding to the points in $\maxvec{X}_i$ and denote the new matrices as $\maxvec{B}_i$ and $\maxvec{W}(\bm{x}_i)$, respectively. The upwind ``nodal'' GMLS coefficient is then obtained by solving the modified weighted least-squares problem
$$
\vec{\bm{c}}_{i}^{\,a}({u}^h)
= \argmin_{\bm{b}\in\mathbb{R}^{n_{1}}}\frac12
\left | \maxvec{B}_i\bm{b} - \maxvec{\bm{\sigma}}^h_a(u)  \right |^2_{\maxvec{W}(\bm{x}_i)} \,.
$$

We use \eqref{thetaeq} with $\theta_{ij}=1/2$ and the first type of ``nodal'' GMLS coefficients to define the \emph{standard} advective flux moments $\widetilde{\bm{\sigma}}_a ({u}^h)$. Setting $\theta_{ij}=1$ if $\mathbf{a}(\bm{x}_{ij})\cdot\left(\bm{x}_j-\bm{x}_i\right) \geq 0$ and $\theta_{ij}=0$ otherwise, and using the second type of GMLS coefficients defines the \emph{upwind} moments $\maxvec{\widetilde{\bm{\sigma}}}_{a}({u}^h)$. 
Succinctly,  
\begin{equation}\label{eq:upwind}
\left(\widetilde{\bm{\sigma}}_a({u}^h) \right)_{ij} = \frac12\left(\bm{c}_{i}^a({u}^h) + \bm{c}_{j}^a({u}^h)\right)
\quad
\mbox{and}\quad
\left(\maxvec{\widetilde{\bm{\sigma}}}_{a}({u}^h)\right)_{ij} =
\begin{cases}
    \vec{\bm{c}}_{i}^{\,a}({u}^h)     & \mbox{if $\mathbf{a}(\bm{x}_{ij})\cdot\left(\bm{x}_j-\bm{x}_i\right) \geq 0$}\\[1ex]
   \vec{\bm{c}}_{j}^{\,a}({u}^h)      & \mbox{if $\mathbf{a}(\bm{x}_{ij})\cdot\left(\bm{x}_j-\bm{x}_i\right) < 0$} 
\end{cases} \,.
\end{equation}

\subsubsection{Diffusive flux moments}\label{sec:difflux}
As in the advective flux case we obtain  $\widetilde{\bm{\sigma}}_d(u^h)$ by blending  ``nodal'' GMLS coefficients. 
However, generating these coefficients requires a different approach because the point values of $\nabla u$ are not specified on the cloud and so, a point sample of $\bm{\sigma}_d(u)$ is not readily available on $X$. This means that we cannot use the setting of  \eqref{eq:gmls-aflux}, which requires such a sample.  
Instead, we shall obtain the ``nodal'' coefficients through an auxiliary GMLS problem for the scalar field $u$ that is exact for quadratic polynomials. Specifically, we consider the GMLS framework in Section \ref{sec:GMLS} with $U=C^1(\Omega)$, $\Phi = P_2$, sampling functionals $\lambda_i=\delta_{\bm{x}_i}$, a target $\tau_{i}(u) =\bm{\sigma}_d(u)|_{\bm{x}_i} =\varepsilon\nabla u(\bm{x}_i)$, and kernel $w(\tau_i,\lambda_j) = \kappa(|\bm{x}_i-\bm{x}_j|)$. 
The GMLS approximation of $\tau_i(u)$ at  $\bm{x}_i\in X$ is then given by
\begin{equation}\label{eq:gmls-grad}
(\bm{\sigma}_d(u^h))_{i} = \bm{c}({u}^h)\cdot (\varepsilon \nabla \phi(\bm{x}_i))
\end{equation}
where $\phi = \{\phi_q\}_{i=1}^{n_2}$ is basis of $P_2$, 
$\nabla \phi(\bm{x}_i) = (\nabla \phi_1(\bm{x}_i),\ldots,\nabla \phi_{n_2}(\bm{x}_i))$, and 
\begin{equation}\label{eq:gmls-dflux1}
\bm{c}_{i}({u}^h)
= \argmin_{\bm{b}\in\mathbb{R}^{n_2}}\frac12
\left | B\bm{b} - u^h  \right |^2_{W(\bm{x}_i)} \,.
\end{equation}
Since $\nabla{\phi}_q \in (P_1)^d$ for all $q=1,\ldots,n_2$, it follows that there exists a matrix $G$ such that
$$
\nabla {\phi} = G \bm{\phi}
$$
where $ \bm{\phi} = \{\bm{\phi}_k\}_{k=1}^{n_1}$ is basis of $(P_1)^d$.  As a result, we can express the target approximation \eqref{eq:gmls-grad} in terms of this basis as 
\begin{equation}\label{eq:gmls-grad-p1}
(\bm{\sigma}_d(u^h))_{i} = \varepsilon_i \bm{g}_{i}({u}^h)\cdot \bm{\phi}(\bm{x}_i)\,,
\end{equation}
where $\bm{g}_{i}({u}^h) = G^\intercal \bm{c}_{i}({u}^h)$ and $\varepsilon_i=\varepsilon |_{\bm{x}_i}$. We refer to $\bm{g}_{i}({u}^h)$ as the \emph{derived} ``nodal'' GMLS coefficients because they are not based on a point sample of the diffusive flux but rather derived from a point sample of the scalar field $u$.  To define the diffusive flux moments on the virtual faces we then use \eqref{thetaeq} with  $\theta_{ij}=1/2$, the derived ``nodal'' coefficients, and an averaged diffusion coefficient $\varepsilon_{ij}$:
 $$
 \left(\widetilde{\bm{\sigma}}_d(u^h)\right)_{ij}
 =
 \frac{\varepsilon_{ij}}{2}\left(\bm{g}_{i}({u}^h) + \bm{g}_{j}({u}^h) \right) \,.
 $$
 In  Section \ref{sec:darcy-hdiv}, we will explore the use of both the arithmetic mean $\varepsilon^{A}_{ij} = (\varepsilon_i+\varepsilon_j)/2$ and the harmonic mean $\varepsilon^{H}_{ij} = 2\varepsilon_i \varepsilon_j/(\varepsilon_i+\varepsilon_j)$, which is more appropriate when $\varepsilon$ has large jumps.

\section{Numerical results}\label{sec:num}

In the following sections, we study numerically the properties of the virtual finite volume scheme \eqref{eq:ad-MMD} for the model problem \eqref{eq:ad}. We consider two versions of this problem corresponding to $\mathbf{a}=\mathbf{0}$ and $\mathbf{a}\neq \mathbf{0}$, respectively. We refer to the first version as the \emph{Darcy} problem and to the second one as the \emph{advection-diffusion} problem.  For simplicity, we restrict attention to spatial domains in $\mathbb{R}^2$. 

Unless otherwise noted, we generate the point clouds for our study using the following procedure. Given a desired fill distance $h_X>0$ we first construct a uniform partition of the boundary comprising segments $\Gamma_i$ of length $h_X$. The barycenters of these segments define ${X}_{\Gamma}$. Then, we  obtain ${X}_0$ by generating a uniform lattice $\mathcal{L}_{h_X}$ of spacing $O(h_X)$ over a square containing $\Omega$, and finally define ${X}_0 = \mathcal{L}_{h_X} \cap \Omega$.

 To remove any symmetries that might prompt superconvergence, we then perturb each point in ${X}_0$ by a uniformly distributed random variable with magnitude $h_X/5$. In so doing we obtain point clouds satisfying the assumption of quasi-uniformity \eqref{eq:quasi}. Figure \ref{fig:diskgeom} shows a point cloud on the unit disk generated by this procedure. To facilitate post-processing of results, it will often be convenient to generate the Delaunay triangulation corresponding to $X$; it will be made explicit if results are presented on the Delaunay triangulation rather than natively on the point cloud. 
\begin{figure}[h!]
  \centering
  \includegraphics[width=0.7\textwidth]{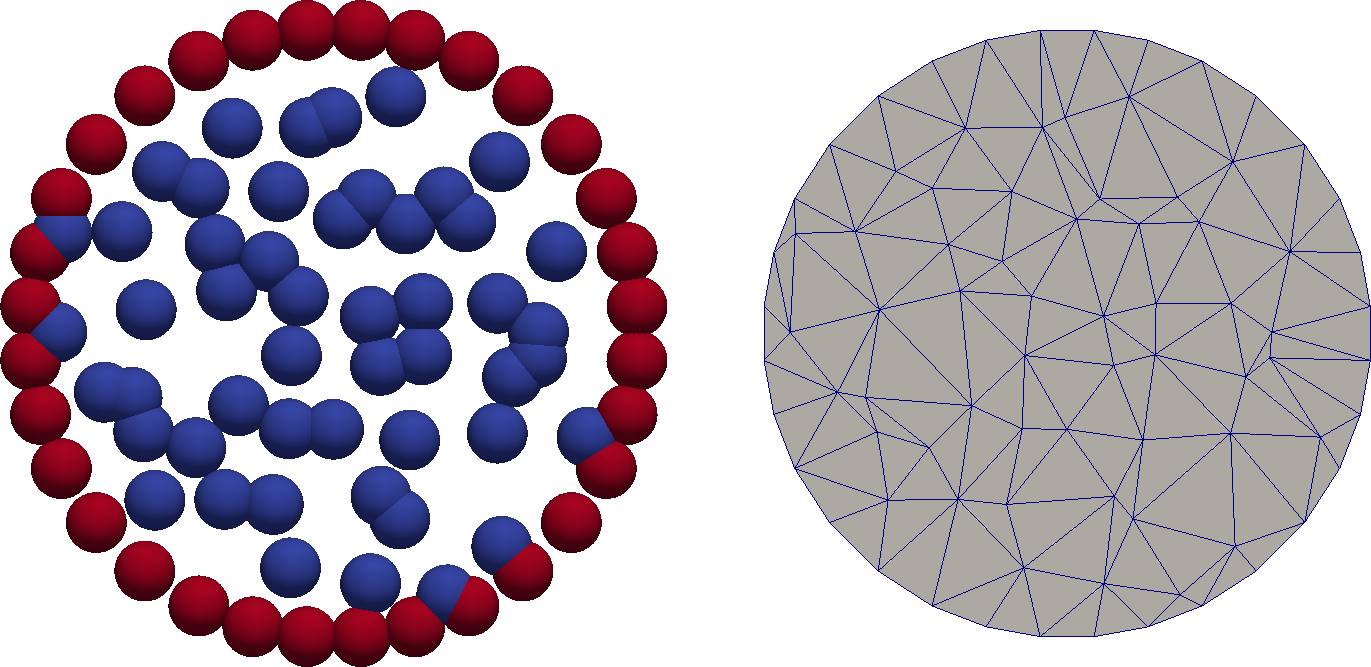}
  \caption{\textit{Left:} Particle distribution for $h_X = 1/8$. The red and blue particles represent $X_\Gamma$ and $X_0$, respectively.  \textit{Right:} a Delaunay triangulation corresponding to $X$ that may be used to display results. }
  \label{fig:diskgeom}
\end{figure}

\subsection{Darcy problem: convergence study}\label{sec:convergence}
\begin{figure}[h!]
  \centering
  \includegraphics[width=0.7\textwidth]{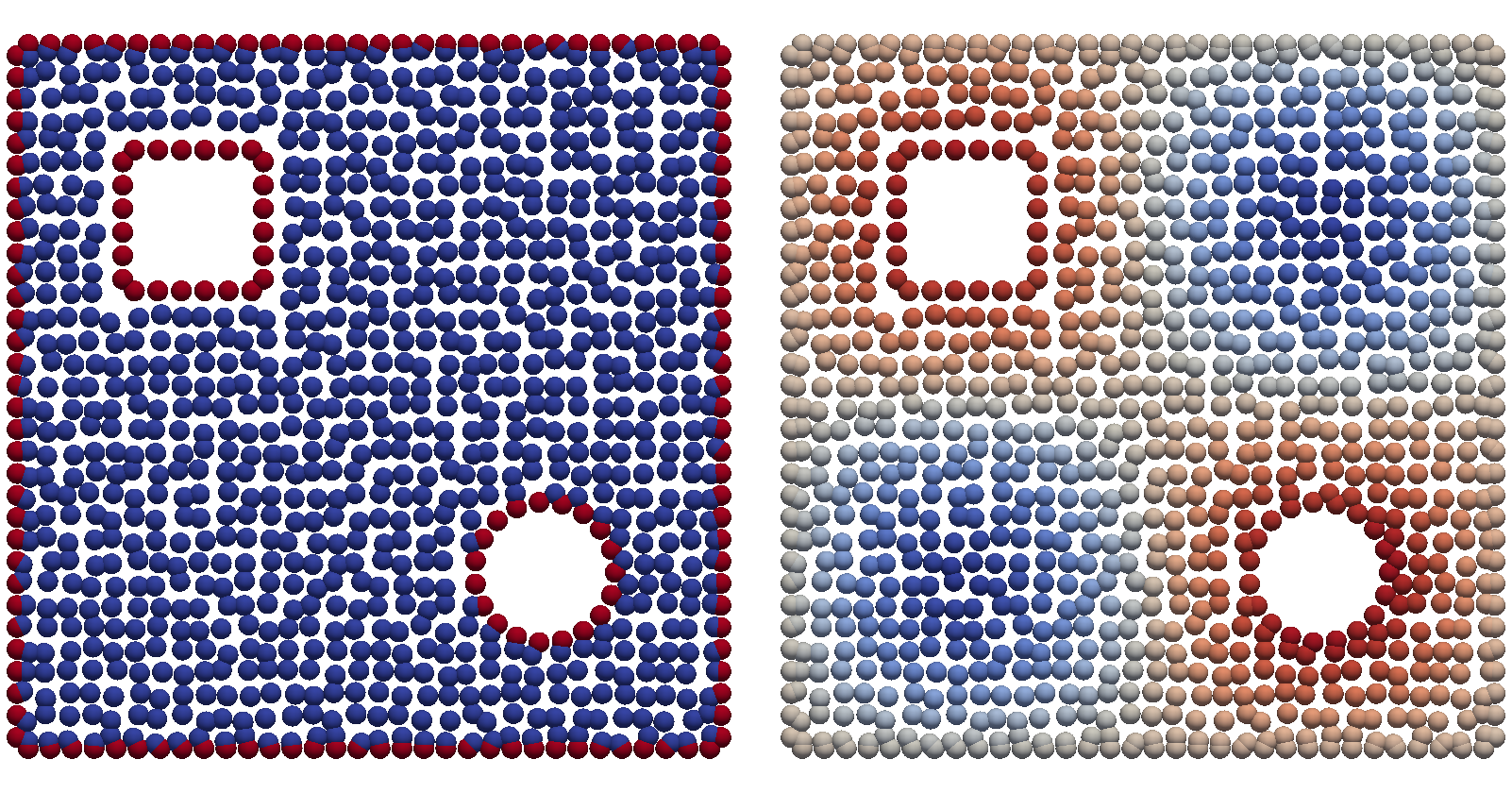}
  \caption{
\textit{Left:} point cloud $X$ generated using a uniform $32\times 32$ lattice. The red and blue particles represent $X_\Gamma$ and $X_0$, respectively.  
\textit{Right:} Approximation to $u_{ex} =  \sin 2 \pi x \sin 2 \pi y$ computed by the virtual finite volume scheme \eqref{eq:ad-MMD}.}
\label{fig:holey}
\end{figure}
We perform a numerical convergence study of  \eqref{eq:ad-MMD} using \eqref{eq:ad} with $\varepsilon=1$, $\mathbf{a}=\mathbf{0}$, pure Neumann and pure Dirichlet boundary conditions,  the smooth manufactured solution $u_{ex}(x,y) = \sin 2 \pi x \sin 2 \pi y$, and the domain shown the  in Figure \ref{fig:holey}. Substitution of $u_{ex}(x,y)$ into the governing equations defines the necessary data fields for the study. 
In the pure Neumann case the solution is determined up to an arbitrary constant. To ensure uniqueness
we use  a Lagrange multiplier to enforce a zero mean condition on the solution. We refer to  \cite{Bochev_05_SIREV} and the references therein for further information on handling pure Neumann conditions. 

To estimate convergence rates we use successively refined point clouds constructed from a sequence of $N\times N$ uniform lattices for $N=16,32,64,128,256$. Rates are estimated with respect to the error measures
$$
\| u^h - u^h_{ex} \|^2_{\ell^2,X}:= \sum_{i\in X} (u^h_i - (u^h_{ex})_i)^2 
\quad\mbox{and}\quad
|  u^h - u^h_{ex} |^2_{1,X}:=\frac{1}{\varepsilon} \sum_{i\in X} |(\bm{\sigma}_d(u^h))_i - (\bm{\sigma}_d(u_{ex}))_i|^2 \,,
$$
respectively, where $u^h_{ex}$ is a point sample of the manufactured solution and  $\bm{\sigma}_d(u^h)$ is the GMLS reconstruction of the diffusive flux defined in \eqref{eq:gmls-grad}. We refer to these measures as  the $\ell_2$-norm and the $h_1$-seminorm, respectively. Table \ref{darcySmooth} shows convergence results for the virtual finite volume solution with pure Dirichlet and pure Neumann boundary conditions in these norms.
We observe roughly the same convergence rate in both norms, and for both types of boundary conditions, which confirms numerically that \eqref{eq:ad-MMD} is first-order accurate. 

%
\begin{table}[]\label{darcySmooth}
\centering
\begin{tabular}{ccccc}
{N}            & \textbf{$\ell_2$-Dirichlet} & \textbf{$h_1$-Dirichlet} & \textbf{$\ell_2$-Neumann} & \textbf{$h_1$-Neumann} \\ \hline
$16$  & 0.0983022             & 0.193076              & 0.494616              & 0.353165   \\
$32$  & 0.0516438             & 0.0943233             & 0.307722              & 0.200986   \\
$64$  & 0.0284723             & 0.0484218             & 0.165076              & 0.103267   \\
$128$ & 0.0184388             & 0.0288341             & 0.0771193             & 0.0512336  \\
$256$ & 0.00831139            & 0.0132393             & 0.0405609             & 0.0257531  \\ \hline
Rate            & 1.14958               & 1.12295               & 0.927002              & 0.992344  
\end{tabular}
\caption{Convergence rates of the virtual finite volume scheme \eqref{eq:ad-MMD} for the manufactured solution $u_{ex}(x,y) = \sin 2 \pi x \sin 2 \pi y$.}
\end{table}

\subsection{Darcy problem: $H(div)$-compatibility study}\label{sec:darcy-hdiv}
In this section we consider Darcy problems with discontinuous coefficients $\varepsilon$.
In this case, for any interface $\eta$ corresponding to a discontinuity in $\varepsilon$, solutions of \eqref{eq:ad} must satisfy the transmission conditions
\begin{equation}\label{jumpCond}
u_{+} = u_{-}\quad\mbox{and}\quad
\bm{n}_{\eta} \cdot \varepsilon_+ \nabla u_{+}= \bm{n}_{\eta} \cdot  \varepsilon_{-}  \nabla u_{-},
\quad\mbox{on $\eta$},
\end{equation}
where $\bm{n}_{\eta}$ is a unit normal to $\eta$ and $+$ and $-$ denote entities associated with the opposite sides of the interface.
Conditions \eqref{jumpCond} imply that  the normal component of the flux $\varepsilon\nabla u$ is always continuous across the interface but its tangential component and the gradient $\nabla u$ may be discontinuous on $\eta$. 
A hallmark of any $H(div)$-compatible discretization for such Darcy problems is the ability to accurately represent this flux behavior, which is essential for, e.g., accurate simulations of subsurface flow problems.

To evaluate the extent to which the virtual finite volume scheme \eqref{eq:ad-MMD} may be deemed $H(div)$-compatible, we consider three benchmark examples adapted from \cite{Hughes-darcy-dg,masud} and driven by pure Neumann conditions. In each one of these examples $\varepsilon$ is a piecewise constant field defined with respect to a disjoint partition $\Omega = {\cup}_i \Omega_i$ of the unit square. The value of $\varepsilon$ on $\Omega_i$ is denoted by $\varepsilon_i$. Figure \ref{fig:darcyGeom} shows the partitions for each one of the three examples. We refer to these examples as the two strip, five strip and five spot problems, respectively. 
%
%
  \begin{figure}[h!]
  \centering
  \includegraphics[width=0.3\textwidth]{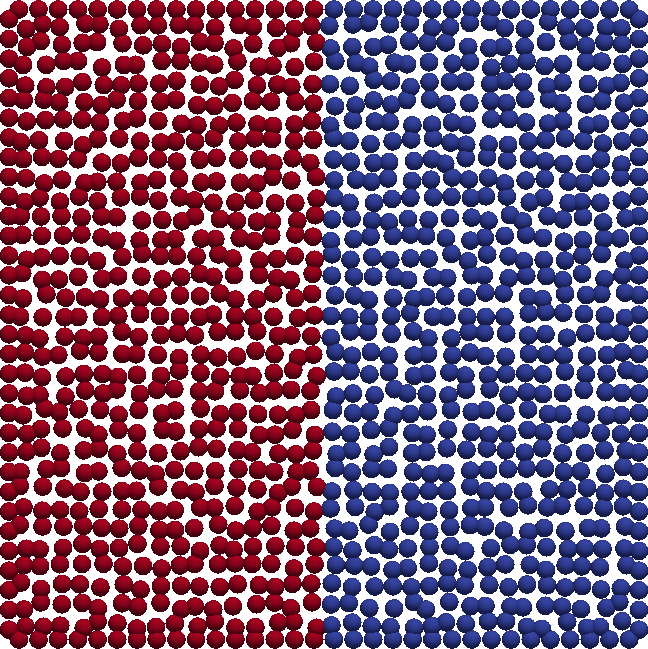}
  \includegraphics[width=0.3\textwidth]{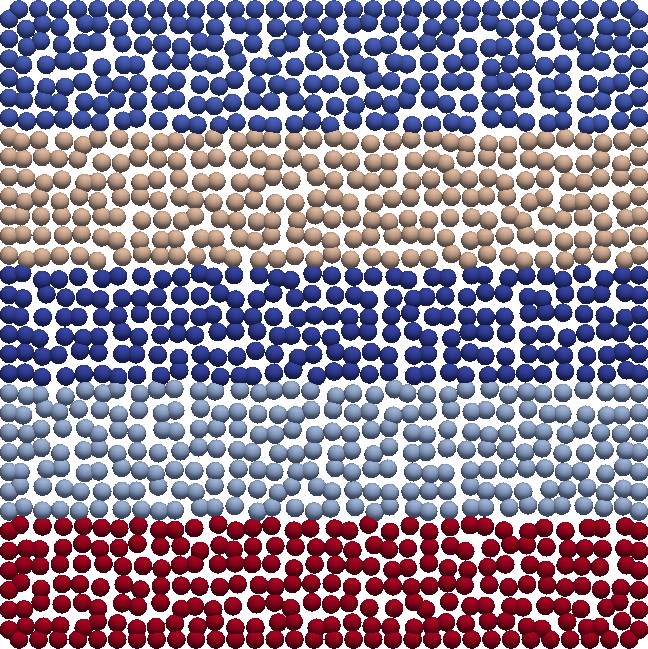}
  \includegraphics[width=0.3\textwidth]{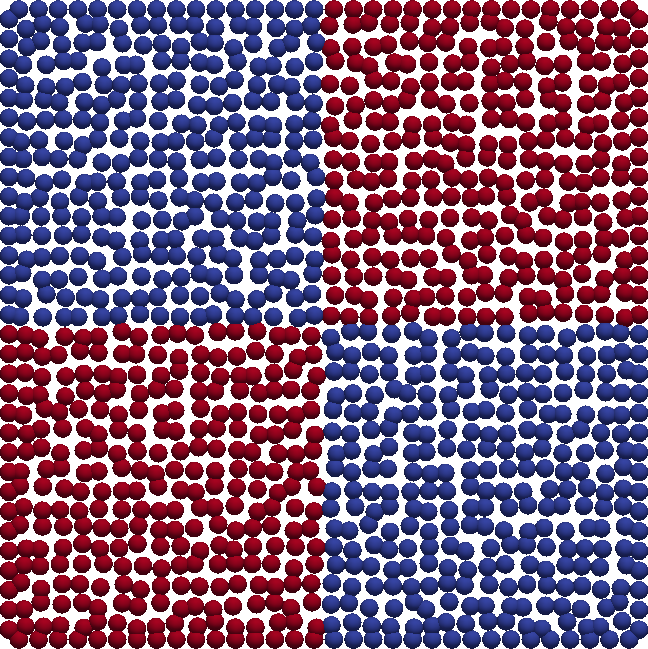}
  \caption{Discontinuous diffusivity fields for Darcy benchmarks. 
   \textit{Left:} The two strip problem. \textit{Center:} The five-strip problem    \textit{Right:} The five-spot problem.}
   \label{fig:darcyGeom}
  \end{figure}

\paragraph{Two strip problem}
In this example $\varepsilon$ is discontinuous along the vertical line $x=1/2$ and 
$$
  u_{ex}(x,y) =
  \begin{cases}
    x & \text{ if } x \leq \frac12 \\
    \frac12 + \frac{\varepsilon_1}{\varepsilon_2} \left( x - \frac12 \right)& \text{ if } x > \frac12 
  \end{cases}
$$
The exact solution satisfies the interface condition \eqref{jumpCond} and the pure Neumann condition
$$
-\varepsilon \nabla u_{ex} \cdot {\bm{n}} = \varepsilon_1\mathbf{e}_1\cdot {\bm{n}}
\quad\mbox{on $\Gamma$}.
$$
where $\bm{n}$ is the outer unit normal to the boundary. Note that the gradient of the exact solution has a discontinuity proportional to the diffusivity ratio $R=\varepsilon_1/\varepsilon_2$. We use this fact to compare and contrast the arithmetic $\varepsilon^{A}_{ij}$ and geometric $\varepsilon^{M}_{ij}$ mean reconstructions of the diffusivity defined in \S\ref{sec:difflux}.
Figure \ref{fig:normal1} compares approximations of $\partial_x u_{ex}$ computed by the virtual finite volume scheme using these two reconstructions. The solution obtained with $\varepsilon^{A}_{ij}$ exhibits oscillations in the vicinity of the discontinuity, while using the harmonic mean allows the scheme to accurately represent the discontinuity in the solution. Figure \ref{fig:normal2} compares solution profiles along $x=0.5$ for increasing diffusivity ratios $R$. We see that the results are comparable for $\varepsilon_1/\varepsilon_2 = O(1)$, but as the ratio is increased the harmonic mean is better able to preserve the location of the jump. Motivated by these results, we use the harmonic mean for all results in the remainder of this paper.
  \begin{figure}[h!]
  \centering
   \includegraphics[width=0.45\textwidth]{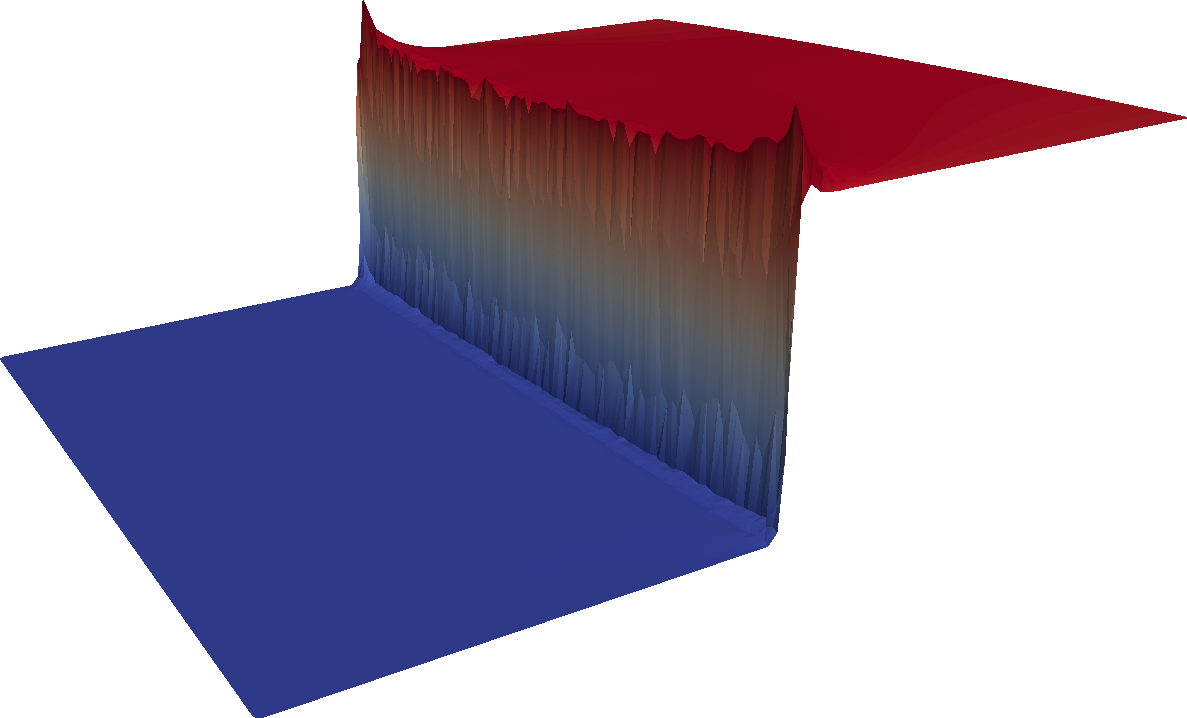}
   \includegraphics[width=0.45\textwidth]{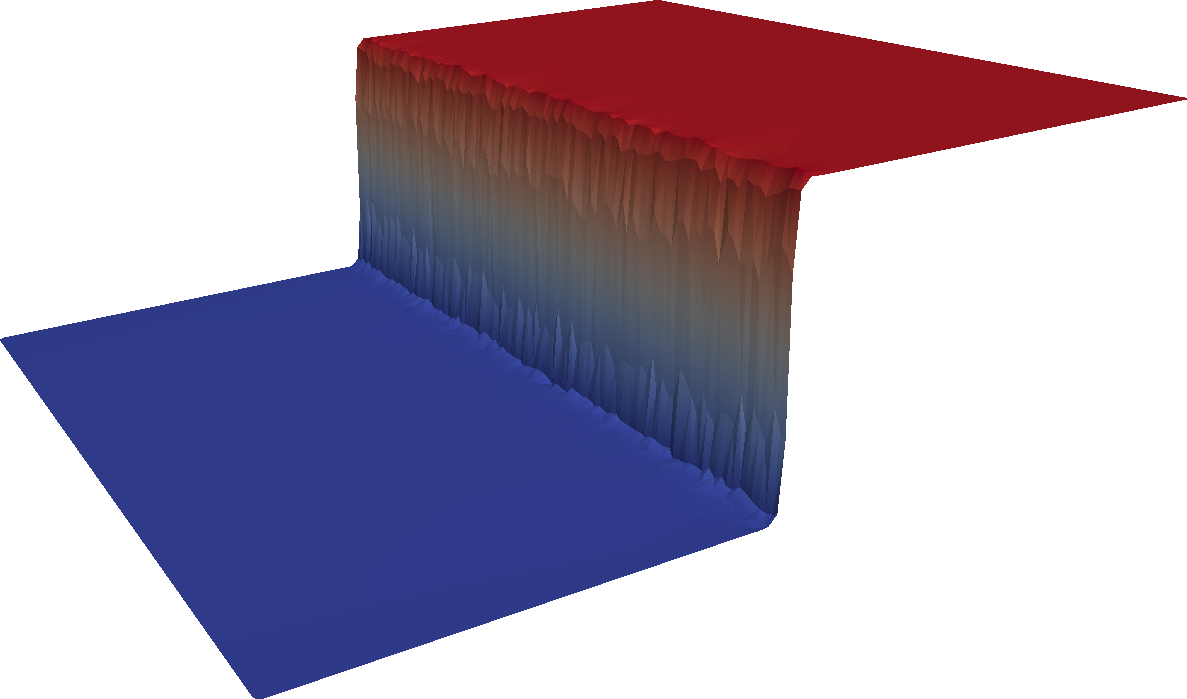}
   \caption{Approximation of $\partial_x u_{ex}$ by the virtual finite volume scheme \eqref{eq:ad-MMD} for $92 \times 92$ particles and diffusivity ratio $R = 64$. \textit{Left}  arithmetic average.  \textit{Right}: harmonic average.}
    \label{fig:normal1}
\end{figure}

  \begin{figure}[h!]
  \centering
   \includegraphics[width=0.45\textwidth]{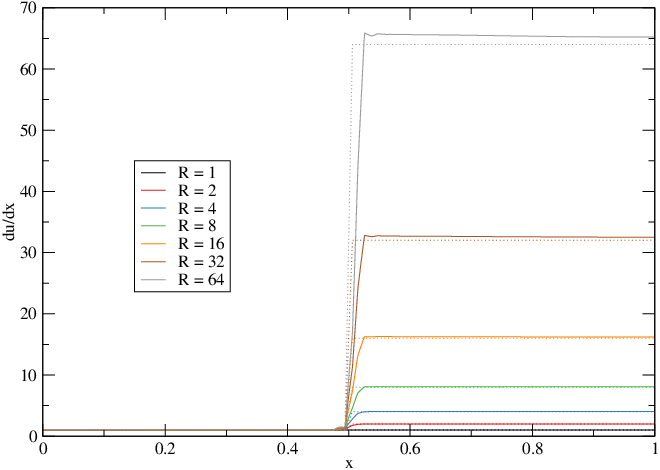}
   \includegraphics[width=0.45\textwidth]{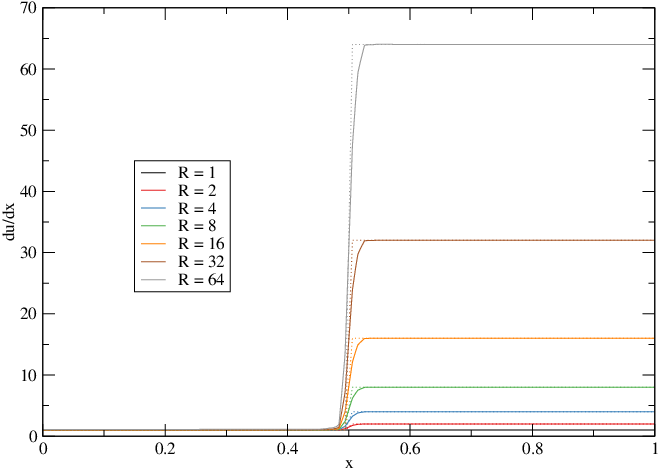}
   \caption{Profile of the virtual finite volume approximation of $\partial_x u_{ex}$  along the line $x = 0.5$ for increasing diffusivity ratios. \textit{Left:}  arithmetic average.  \textit{Right}: harmonic average. Exact solution is given by dashed line.}
   \label{fig:normal2}
\end{figure}

\paragraph{The five-strip problem} 
In this example $\varepsilon$ has discontinuities along the lines $y=0.2$, $y=0.4$, $y=0.6$ and $y=0.8$, which divide the unit square into five horizontal strips. We set the diffusivity value on each strip  to
$\varepsilon_1=16$, $\varepsilon_2=6$, $\varepsilon_2=1$, $\varepsilon_4=10$, and $\varepsilon_5=2$, respectively and let
$$
u_{ex} = 1-x.
$$
This exact solution satisfies the interface conditions \eqref{jumpCond}, the pure Neumann condition
$$
\nabla u_{ex} \cdot {\bm{n}} = \mathbf{e}_1 \cdot {\bm{n}},
$$
has a continuous vertical flux component $\varepsilon\partial_{y}u_{ex}=0$, and a piecewise constant horizontal flux component $\varepsilon\partial_{x}u_{ex}=\varepsilon_i$ on $\Omega_i$. The five strip benchmark is designed to test how well a scheme can represent this flux behavior. Figure \ref{fig:5strip1} shows the approximation of $\varepsilon\partial_{x}u_{ex}$ for both uniform and non-uniform point clouds. The profiles of the horizontal flux approximation along $x=0.5$ for increasing point cloud resolutions are shown in Figures \ref{fig:5strip2}. These results show that the virtual finite volume scheme \eqref{eq:ad-MMD} can accurately represent the flux components on both uniform and non-uniform point clouds. 
In particular, approximations of the discontinuous, horizontal flux component on uniform point clouds are 
indistinguishable from those obtained by an $H(div)$-conforming, mesh-based scheme using interface-fitted grids. The small wiggles in the non-uniform case are caused by the non-symmetric distribution of points across the interfaces, which is the meshless analogue of an unfitted grid. We note that an $H(div)$ scheme on an unfitted grid would have a similar behavior. 

\begin{figure}[h!]
  \centering
  \includegraphics[width=0.45\textwidth]{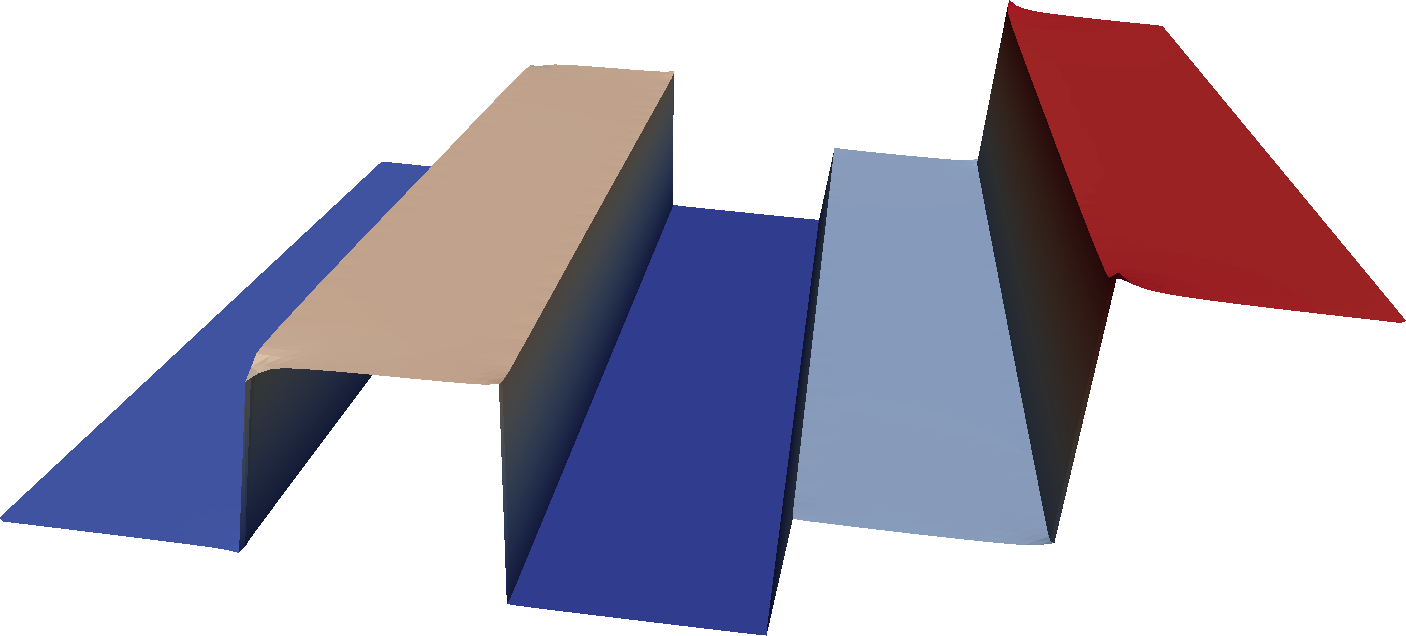}
  \includegraphics[width=0.45\textwidth]{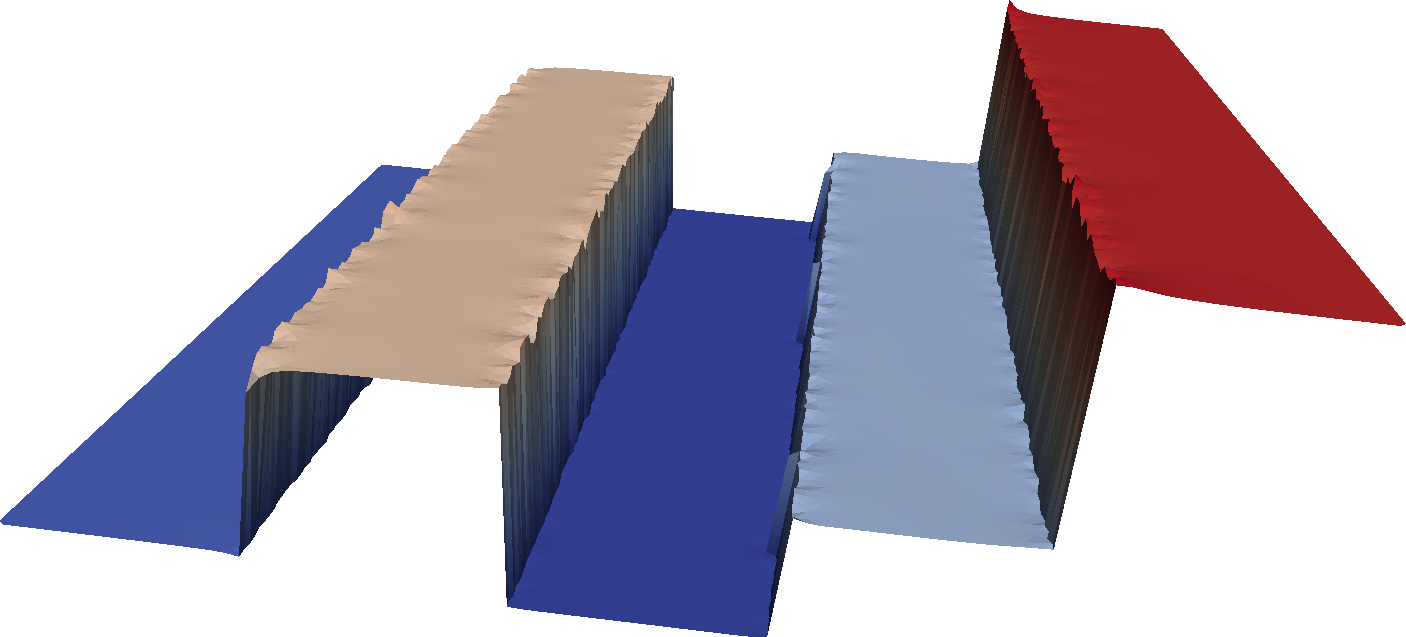} \\
    \includegraphics[width=0.45\textwidth]{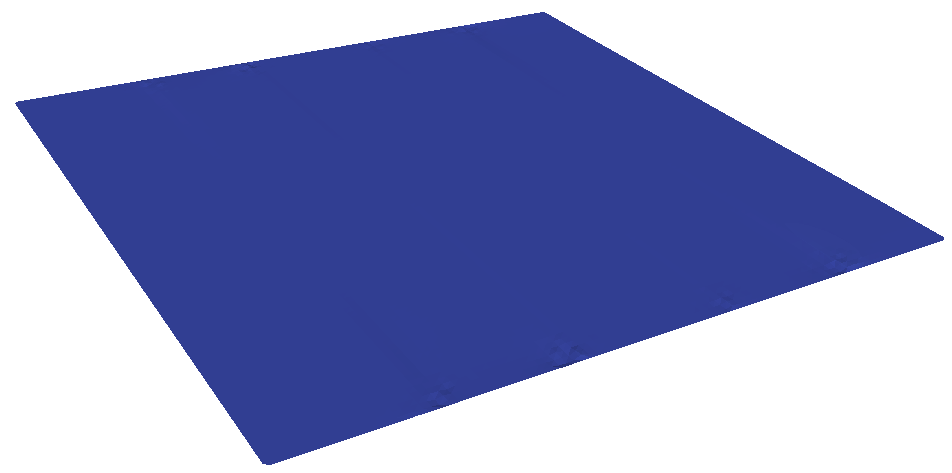}
  \includegraphics[width=0.45\textwidth]{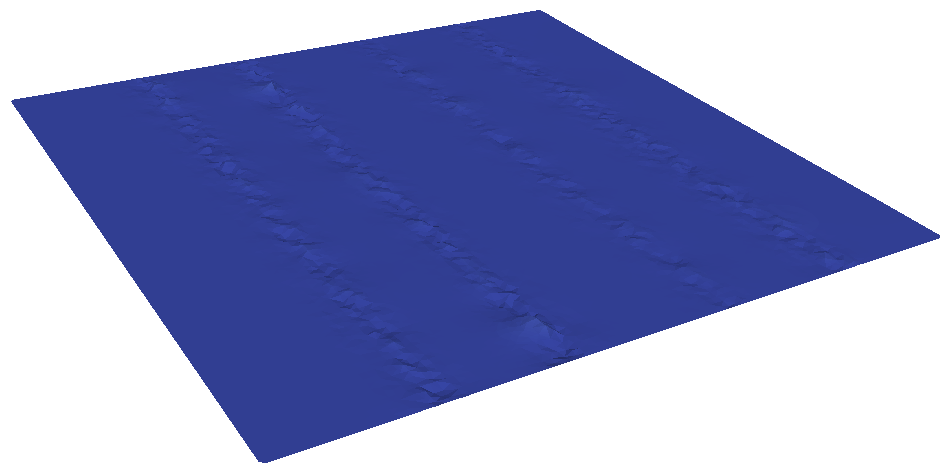}
  \caption{Approximation of the horizontal (top) and vertical (bottom) flux components by the virtual finite volume scheme \eqref{eq:ad-MMD} on a particle cloud with $h_{X} = 1/96$. \textit{Left:} uniform (Cartesian) particle distribution. \textit{Right:} non-uniform particle distribution.}
  \label{fig:5strip1}
\end{figure}

\begin{figure}[h!]
  \centering
  \includegraphics[width=0.45\textwidth]{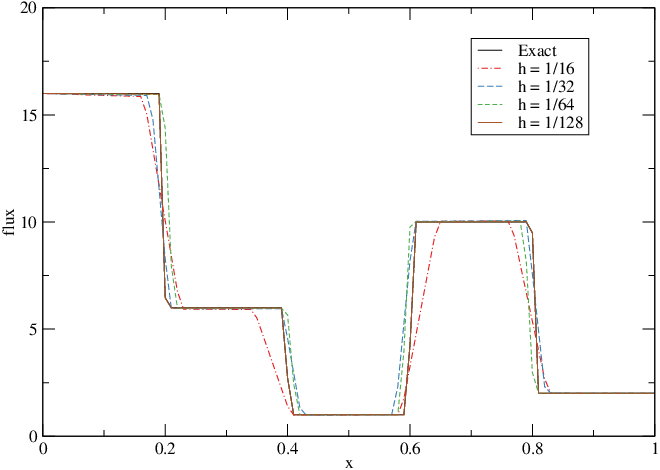}
  \includegraphics[width=0.45\textwidth]{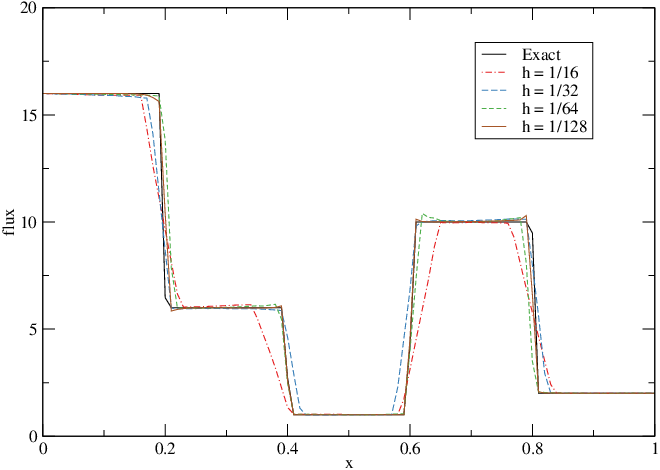}
  \caption{Profiles of the horizontal flux approximation by the virtual finite volume scheme along the line $x=0.5$ as the point cloud is refined.  
  \textit{Left:} uniform (Cartesian) particle distribution. \textit{Right:} non-uniform particle distribution.}
\label{fig:5strip2}
\end{figure}

\paragraph{The five-spot problem}
This example models an injection-extraction application in which the flow is driven from an injection well in the bottom left corner of the domain to an extraction well in the top right. The diffusivity $\varepsilon$ is a piecewise constant which assigned the same value on the bottom left and top right subdomains and another value on the bottom right and top left subdomains; see the right plot in Fig. \ref{fig:darcyGeom}.

To drive the flow we use the same Neumann conditions as in  \cite[Figure 26]{masud}. Specifically, we apply a flux of $-\frac18$ to both faces coincident to the bottom left corner, and $\frac18$ to those coincident with the top right (Figure \ref{fig:5spot0}). In the case where $\varepsilon_1 = \varepsilon_2$, this problem may be solved analytically via superposition of Greens function solutions, assuming a periodic domain. Figure \ref{fig:5spot1} demonstrates convergence of the virtual finite volume scheme to the analytic solution for both the pressure and the flux.

\begin{figure}[h!]
  \centering
  \includegraphics[width=0.2\textwidth]{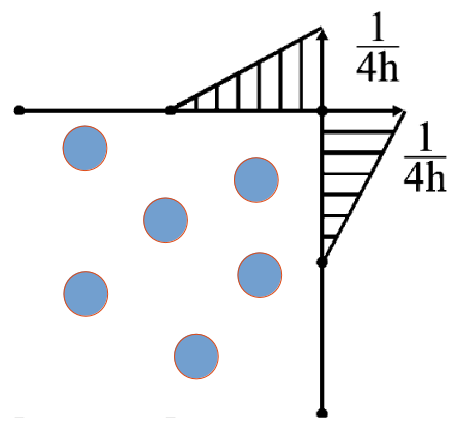}
  \caption{
Specification of a flux field yielding a total flux of $1/8$ across the faces coincident with the top right corner of the domain. A similar field is specified on the faces coincident with the bottom left corner, yielding a total flux of $-1/8$ across those faces. 
  } 
\label{fig:5spot0}
\end{figure}

\begin{figure}[h!]
  \centering
    \includegraphics[height=0.25\textwidth]{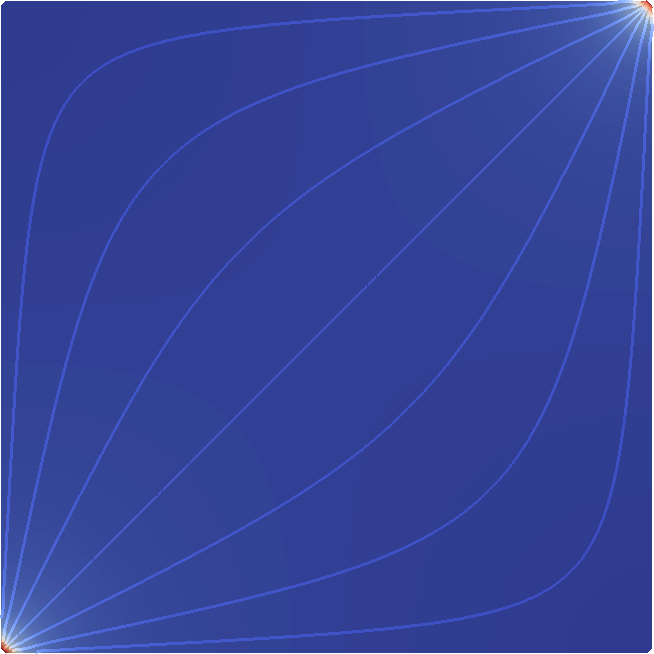}
   \includegraphics[height=0.25\textwidth]{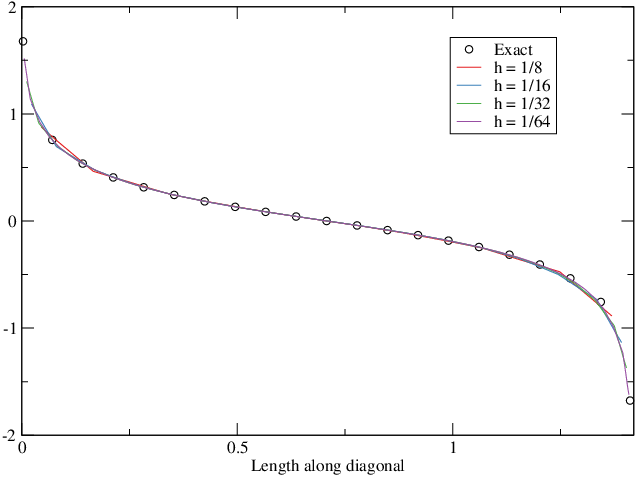}
   \includegraphics[height=0.25\textwidth]{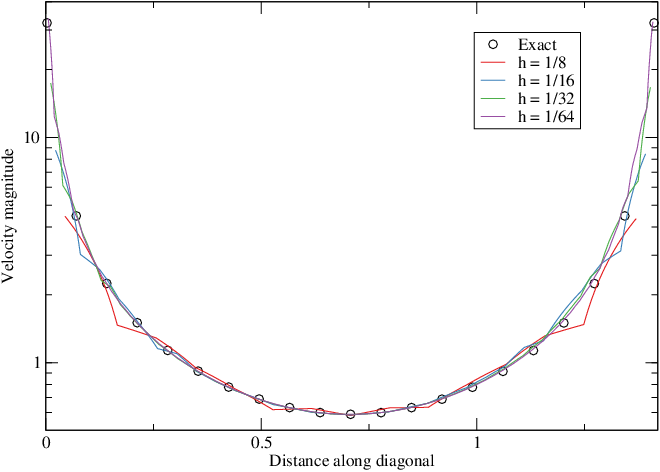}
   \caption{Comparison of the analytic solution of the five spot problem with $\varepsilon_1=\varepsilon_2=0.5$ and solutions of the virtual finite volume scheme \eqref{eq:ad-MMD} on successively refined point clouds. 
 \textit{Left:} Streamlines of flow. 
 \textit{Center:} pressure along $x=y$ diagonal. 
 \textit{Right:} flux along $x=y$ diagonal.}
 \label{fig:5spot1}
\end{figure}

\begin{figure}[h!]
  \centering
 \includegraphics[width=0.5\textwidth]{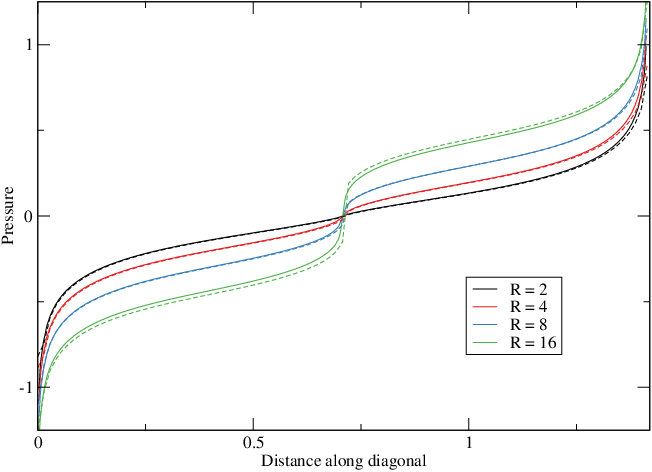}\qquad
 \includegraphics[width=0.35\textwidth]{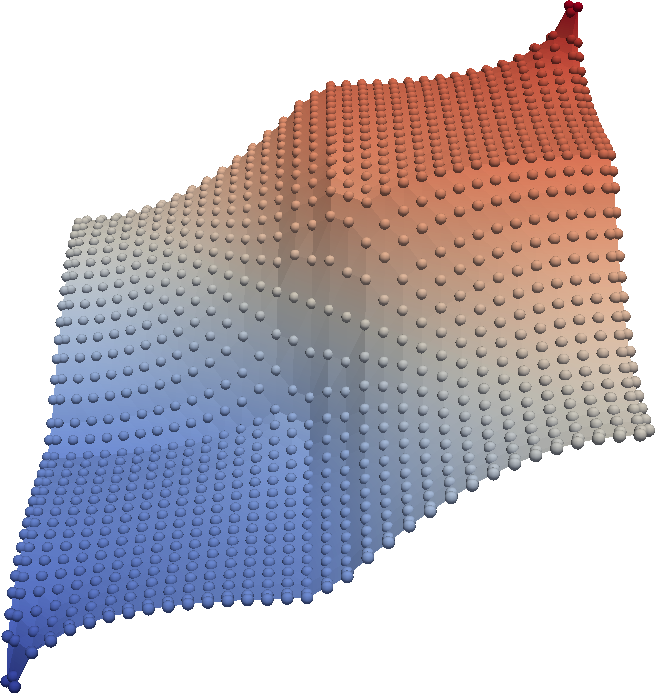}
\caption{
The five spot problem with varying $R = \varepsilon_1/\varepsilon_2$.
\textit{Left:} Comparison of the pressure profiles along $y=x$ by the virtual finite volume scheme \eqref{eq:ad-MMD} (solid lines) and a  mixed method using RT0-P0 elements (dashed lines) for increasing ratios $R$. 
\textit{Right:} Pressure approximation by the virtual finite volume scheme \eqref{eq:ad-MMD} for large ratio $R = 1000$.}
\label{fig:5spot2}
\end{figure}

To highlight the robustness of the virtual finite volume scheme we consider the five spot problem with different diffusivity ratios $R = \varepsilon_1/\varepsilon_2$. The left plot in Figure \ref{fig:5spot2} compares the pressure computed by \eqref{eq:ad-MMD} with a solution obtained by a mixed method implemented with the RT0-P0 element pair. We see that for all diffusivity ratios the two pressure profiles are nearly identical. Finally, the right plot in the same figure shows that the virtual finite volume scheme continues to deliver stable solutions even for very large material contrasts. 

The results in this section suggest that for a range of representative example problems the virtual finite volume scheme does indeed behave in a manner that mimics the behavior of mesh-based $H(div)$-conforming methods, such as mixed finite elements implemented with the RT0-P0 element pair.

\subsection{Advection-diffusion problem}
In this section we examine the performance of  the virtual finite volume scheme \eqref{eq:ad-MMD} for the advection-diffusion problem using a range of P\'{e}clet numbers
$$
Pe := \frac{||\mathbf{a}||}{\varepsilon} 
$$
spanning both the diffusion-dominated, i.e., $Pe = O(1)$, and the advection dominated, i.e.,  $Pe \gg O(1)$, regimes. Our objectives are twofold. First, using manufactured solutions, we aim to demonstrate the consistency and the accuracy of the virtual finite volume scheme for both the centered $\widetilde{\bm{\sigma}}_a({u}^h)$ and the upwind $\maxvec{\widetilde{\bm{\sigma}}}_{a}({u}^h)$ reconstructions of the advective flux defined in \eqref{eq:upwind}. 
Our second goal is to confirm that the construction of $\maxvec{\widetilde{\bm{\sigma}}}_{a}({u}^h)$ does provide an appropriate notion of upwinding in the advection-dominated regime.


In both cases we consider the unit square with a quasi-uniform point cloud $X$ defined according to the procedure described at the beginning of \S\ref{sec:num}. To study the accuracy of the virtual finite volume scheme we use the same manufactured solution as in \S\ref{sec:convergence}, i.e., $u_{ex}(x,y)=\sin 2 \pi x \sin 2 \pi y$, and apply Dirichlet boundary conditions at all particles in $X_\Gamma$. 

\begin{table}[]\label{ADcenterconv}
\centering
\begin{tabular}{rllcc}
${N}$      & {$Pe = 1$} & {$Pe = 10$} & {$Pe = 100$} & \multicolumn{1}{l}{{$Pe = 1000$}} \\ \hline
$16$          & 0.0535674       & 0.0959928         & n.c.                & n.c.                                     \\
$32$          & 0.0301618       & 0.060635          & n.c.                & n.c.                                     \\
$64$          & 0.0152526       & 0.0333928         & 0.0590015          & n.c.                                     \\
$128$         & 0.00756073      & 0.0175604         & 0.0314838          & n.c.                                     \\
$256$         & 0.00376267      & 0.00901208        & 0.0161104          & n.c.                                     \\ \hline
Rate      & 1.006           & 0.957             & 0.963              & n.c.                                    
\end{tabular}
\caption{Convergence rates in $\ell_2$-norm of the virtual finite volume approximation to $u_{ex}(x,y)=\sin 2 \pi x \sin 2 \pi y$ using the centered advective flux reconstruction $\widetilde{\bm{\sigma}}_a({u}^h)$.
An entry of $n.c.$ denotes that the linear solver failed to converge due to ill-conditioning.}
\label{ADcenterconv}
\end{table}

\begin{table}[]\label{ADupwindconv}
\centering
\begin{tabular}{rlllll}
${N}$             & {$Pe = 1$} & {$Pe = 10$} & {$Pe = 100$} & \multicolumn{1}{l}{{$Pe = 1000$}} & {$Pe = 10000$} \\ \hline
$16$  & 0.061851        & 0.11673           & 0.146463           & 0.153675                                & 0.15467              \\
$32$  & 0.031884        & 0.0633952         & 0.0911815          & 0.0971759                               & 0.0979677            \\
$64$  & 0.0155568       & 0.0339481         & 0.0550493          & 0.0595179                               & 0.0600997            \\
$128$ & 0.00762096      & 0.0177066         & 0.0304351          & 0.0331474                               & 0.0337972            \\
$256$ & 0.00377766      & 0.009056          & 0.0158462          & 0.0172527                               & 0.0174534            \\ \hline
Rate             & 1.02            & 0.967             & 0.944              & 0.944                                   & 0.954   
\end{tabular}
\caption{Convergence rates in $\ell_2$-norm of the virtual finite volume approximation to $u_{ex}(x,y)=\sin 2 \pi x \sin 2 \pi y$ using the upwind advective flux reconstruction $\maxvec{\widetilde{\bm{\sigma}}}_{a}({u}^h)$.}
\label{ADupwindconv}
\end{table}

Tables \ref{ADcenterconv} and \ref{ADupwindconv} present convergence rates for increasing P\'{e}clet numbers using the centered $\widetilde{\bm{\sigma}}_a({u}^h)$ and the upwind $\maxvec{\widetilde{\bm{\sigma}}}_{a}({u}^h)$ flux reconstructions, respectively.
The results in Table \ref{ADcenterconv} show that  virtual finite volume scheme with the centered advective flux reconstruction is able to maintain $O(h)$ convergence for small to moderate P\'{e}clet numbers. However, as the problem becomes strongly advection-dominated, using the centered flux leads to ill-conditioned discrete equations and failure of the iterative solver to converge. 
On the other hand, Table \ref{ADupwindconv} reveals that with the upwind reconstruction $\maxvec{\widetilde{\bm{\sigma}}}_{a}({u}^h)$ the virtual finite volume scheme remains first-order accurate over a wide range of P\'{e}clet numbers.

To further study the appropriateness of $\maxvec{\widetilde{\bm{\sigma}}}_{a}({u}^h)$ in the advection-dominated case we consider an example with discontinuous boundary data and velocity field $\mathbf{a}=(1,2)$ similar to the skew advection test in \cite{Brooks_82_CMAME}. 
To describe the problem setup let $\Gamma_t$, $\Gamma_r$, $\Gamma_b$ and $\Gamma_l$ denote the top, right, bottom, and left sides of the unit square domain, respectively.  On the bottom and left boundaries we impose Dirichlet boundary conditions given by 
$$
u  = 1\ \ \mbox{on $\Gamma_l$}
\qquad\mbox{and}\qquad
u  = 
\begin{cases}
    1,              &  x \leq \frac14 \\
    0,              &  x > \frac14
\end{cases}
\ \ \mbox{on $\Gamma_b$},
$$
respectively. On the remaining parts of the boundary we impose either the Dirichlet conditions 
$$
u = 1 \ \ \mbox{on $\Gamma_t$}
\qquad\mbox{and}\qquad
u = 0  \ \ \mbox{on $\Gamma_r$}
$$
or the following outflow condition, derived by setting $\bm{n}\cdot \bm{\sigma}_{d}(u) = 0$ in the flux definition.
\begin{equation}\label{outflow}
  \bm{n} \cdot \bm{\sigma}(u) = \bm{n} \cdot \mathbf{a} u_i
   \ \ \mbox{on $\Gamma_t \cup \Gamma_r$} .
\end{equation}

Figure \ref{fig:advdif} shows plots of the solution for both the pure Dirichlet and Dirichet/outflow cases computed by the virtual finite volume scheme with the upwind advective flux for $Pe \in \left\{1,10,100\right\}$. 
%
%
A solution plot for the Dirichet/outflow boundary conditions and $Pe = 1000$ is shown on Figure \ref{fig:advdif2}. We see that in this case the solution develops moderate crosswind oscillations that remain localized near the internal layer. This behavior is similar to the one observed in, e.g., streamline upwind finite element methods \cite{Knobloch_08_ETNA} and can be corrected by including an appropriate discontinuity capturing term \cite{Hughes_86a_CMAME}. However, design of an advective flux reconstruction incorporating such a term is beyond the scope of this paper.  
\begin{figure}[t]
  \centering
 \includegraphics[width=0.9\textwidth]{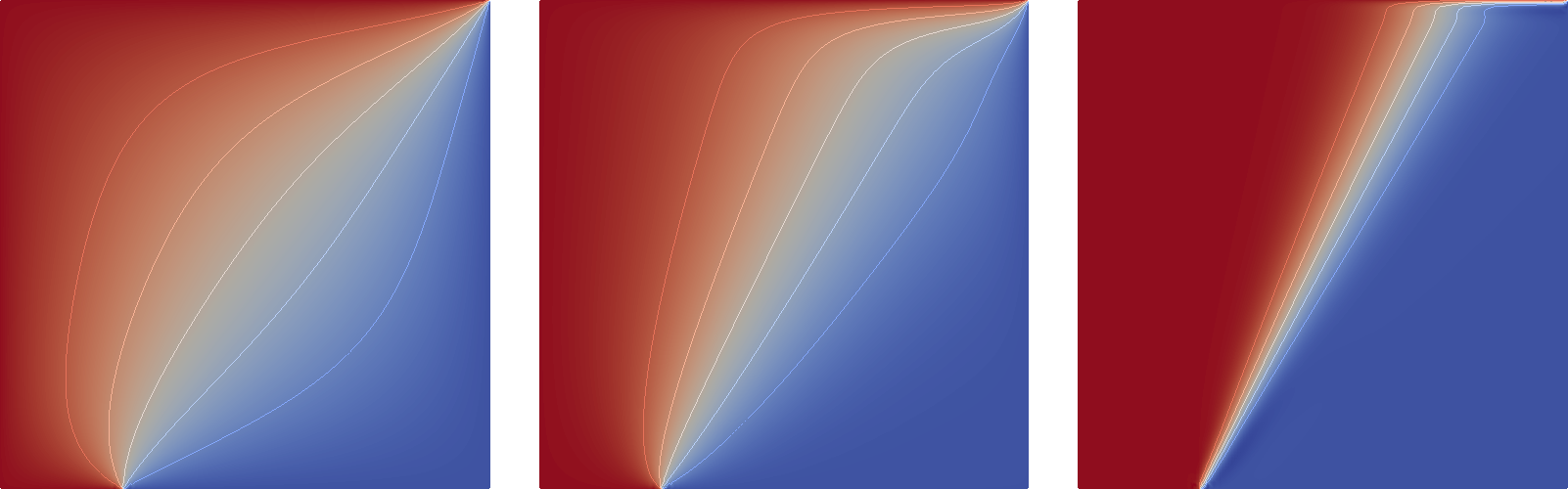}\\
 \includegraphics[width=0.9\textwidth]{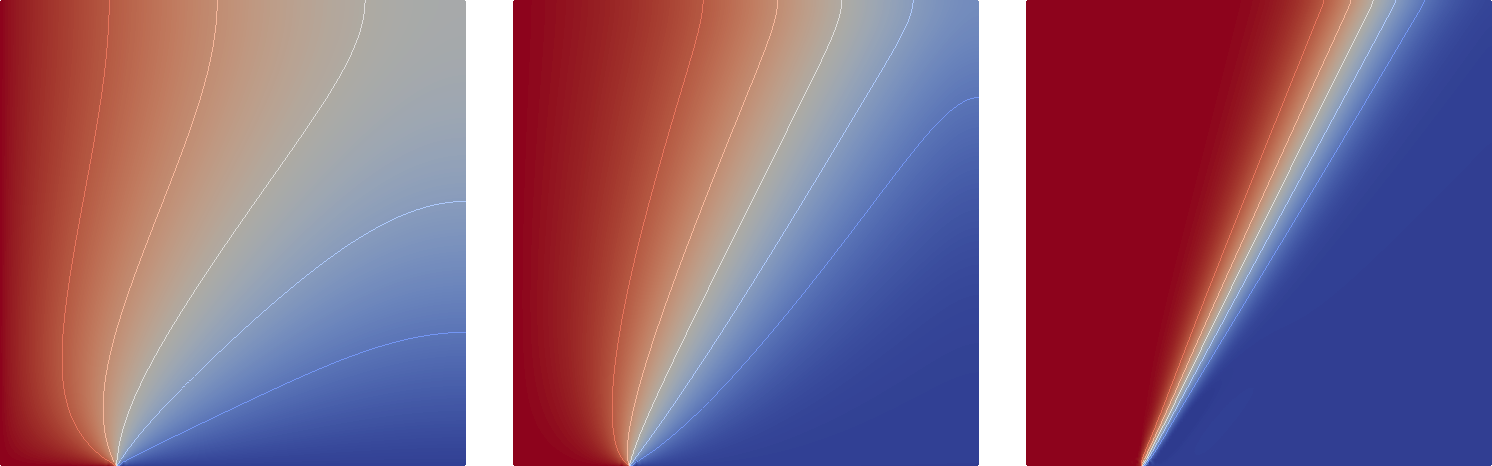}
\caption{Solution of the skew advection example by the virtual finite volume scheme with the upwind flux $\maxvec{\widetilde{\bm{\sigma}}}_{a}({u}^h)$ using  Dirichlet (\textit{top}) and Dirichlet/outflow (\textit{bottom}) boundary conditions.
 \textit{Left:} $Pe = 1$. \textit{Center:} $Pe = 10$.  \textit{Right:} $Pe = 100$.}
 \label{fig:advdif}
\end{figure}
\begin{figure}[t]
  \centering
 \includegraphics[width=0.35\textwidth]{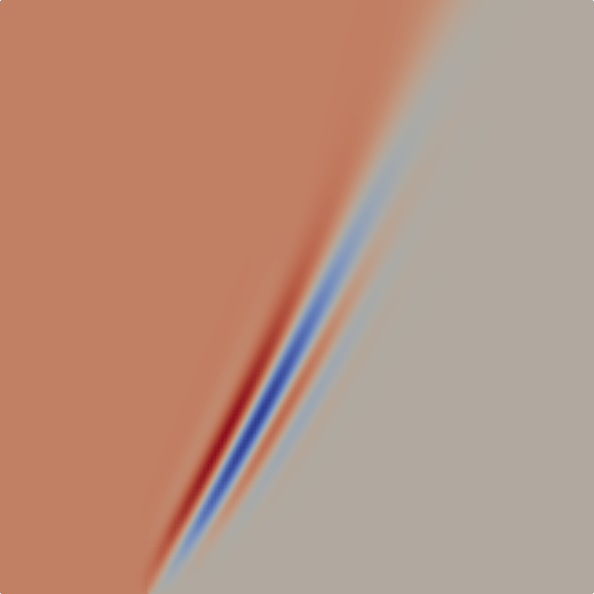}
\caption{Solution of the skew advection example by the virtual finite volume scheme with the upwind flux $\maxvec{\widetilde{\bm{\sigma}}}_{a}({u}^h)$ using  Dirichlet/outflow (\textit{bottom}) boundary conditions. $Pe=1000$.}
\label{fig:advdif2}
\end{figure}

\subsection{Sensitivity of MMD with respect to virtual volume definitions}\label{sec:volume-sens}
We conclude by examining the impact of the virtual volume definition on the truncation error of the MMD operator \eqref{eqn:SBPdiv}.  For this study we use the smooth vector field $\bm{u} = 2 \pi \left( \cos 2 \pi x \sin 2 \pi y, \sin 2 \pi x \cos 2 \pi y \right)$, the non-trivial geometry illustrated in Figure \ref{fig:holey}, and quasi-uniform point clouds generated according to the procedure described earlier in \S\ref{sec:num}. 
Table \ref{truncation} shows the truncation errors and the corresponding convergence rates for $\DIVGMLSP$ implemented with the uniform volumes $ \mu_i^{\mbox{\tiny\sf U}} $ and the non-uniform volumes $ \mu_i^{\mbox{\tiny\sf MR}} $, respectively. The results in this table show that in both cases the MMD operator remains first-order accurate.
%
\begin{table}[]
\centering
\begin{tabular}{ccc}
$N$            & $ \mu_i^{\mbox{\tiny\sf U}} $ & $ \mu_i^{\mbox{\tiny\sf MR}} $
\\ \hline
$16$  & 0.312637            & 0.267592            
\\
$32$  & 0.188097            & 0.145183            
\\
$64$  & 0.102245            & 0.076769            
\\
$128$ & 0.054184            & 0.040412            
\\
$256$ & 0.027495            & 0.020119            
\\ \hline
Rate            & 0.978711            & 1.00622             
\end{tabular}
\caption{Truncation errors in $\ell_2$-norm and convergence rate of the MMD operator \eqref{eqn:SBPdiv} implemented using the uniform \eqref{eqn:vol1} and non-uniform \eqref{eqn:vol2}  virtual volume definitions. 
}
\label{truncation}
\end{table}

\section{Conclusion}\label{sec:concl}
%
The principal contribution of this work is the formulation of a consistent and conservative mimetic meshfree divergence operator that can be used with or without a background grid while remaining computationally efficient in the latter case. 
Specifically, implementation of the operator in the absence of  background mesh involves solution of graph Laplacian problems, which can be accomplished in a scalable and efficient manner by using standard multigrid techniques. 
We demonstrate the accuracy and robustness of our operator by using it to define a virtual finite volume scheme for a scalar advection-diffusion equation. Our numerical results show that the scheme is first-order accurate, exhibits the same traits as an $H(div)$-conforming mesh-based discretization and can be equipped with an appropriate notion of upwinding for advection-dominated problems.

\section*{Acknowledgments}
This material is based upon work supported by the U.S. Department of Energy,
Office of Science, Office of Advanced Scientific Computing Research under Award
Number DE-SC-0000230927, and the Laboratory Directed Research and Development
program at Sandia National Laboratories.  The first author acknowledges support through the NSF-MSPRF program.






\section*{References}
\bibliographystyle{elsarticle-num-names}
\bibliography{trask18jcp}

\end{document}